\newtheorem{theorem}{Theorem}
\newtheorem{corollary}{Corollary}
\newtheorem{proposition}{Proposition}
\newtheorem{lemma}{Lemma}
\newtheorem{remark}{Remark}
\newcommand{\dsp}{\displaystyle}
\newcommand{\R}{\mathbb{R}}
\numberwithin{equation}{section}
\begin{document}
\title{A Vlasov-Poisson plasma with unbounded\\ mass and velocities confined in a cylinder\\  by a magnetic mirror.}
\author{Silvia Caprino*, Guido Cavallaro$^{+}$ and Carlo Marchioro$^{++}$}
\maketitle
\begin{abstract}
We study the time evolution of a single species positive plasma, confined in a cylinder and having infinite charge. We extend the result of a previous work by the same authors, for a plasma density having compact support in the velocities, to the case of a density having unbounded support and gaussian decay in the velocities.
\end{abstract}
\textit{Key words}: Vlasov-Poisson equation, confined plasma, infinite charge.

\noindent
\textit{Mathematics  Subject  Classification}: 35Q99, 76X05.

\footnotetext{*Dipartimento di Matematica Universit\`a Tor Vergata, via della Ricerca Scientifica, 00133 Roma (Italy), 
caprino@mat.uniroma2.it}
\footnotetext{$^+$Dipartimento di Matematica Universit\`a  La Sapienza, p.le A. Moro 2,  00185 Roma (Italy),  
cavallar@mat.uniroma1.it}
\footnotetext{$^{++}$Dipartimento di  Matematica Universit\`a La Sapienza, p.le A. Moro 2, 00185 Roma (Italy),  
marchior@mat.uniroma1.it}

\section{Introduction}
In this paper we study the behavior in time of a Vlasov-Poisson plasma, with infinite charge and velocities, confined in an infinite cylinder by an external magnetic field, the so-called $magnetic\ mirror.$ To specify the problem, we consider a continuous distribution of positively charged particles, assembled at time zero in an infinite cylinder and kept inside it by means of an external magnetic field, which diverges at a distance $A$ from its symmetry axis. The plasma evolves under the action of the auto-induced electric field plus the external force, and it is governed by the Vlasov-Poisson equations in which an extra term is added due to the external Lorentz force. Our aim is to investigate the existence and uniqueness of the time evolution of this system and its confinement over an arbitrary time interval $[0,T].$

This problem has been studied by the same authors in \cite{CCM1, CCM2, Rem}, and in all these papers it is assumed that the density has compact support in the velocities.  Moreover, in the first paper \cite{CCM1} it is considered a density having compact support in space; in the second one \cite{CCM2} this assumption is removed and it is only assumed that the density is bounded. In this case, however, it is considered an interaction potential of Yukawa type, that is Coulomb at short distance and exponentially decaying at infinity; finally in the third paper \cite{Rem} it is considered the Coulomb potential, but it is assumed that the spatial density, even if not integrable, has to satisfy some decaying properties at infinity. Here we generalize this last result to a density without compact support in the velocities. More precisely, we consider a plasma having infinite charge and velocities, and we assume that its density is slowly decaying in space (not integrably) and gaussian in the velocities.

While the theory of the Vlasov-Poisson equation for integrable data is much developed (see for instance \cite{L} and \cite{Pf} for $L^1$ data in space and velocities,  \cite{ S, W} for compactly supported densities and \cite{G} for a nice review of results in this context), the difficulties are increasing as one tries to remove this assumption, since one has to match the divergence of the Coulomb interaction with the infinite charge and velocities of the system. We quote \cite{P, S1, S2, S3} as it regards results for infinite charge systems. Our method to control the infinite charge, and hence systems with infinite total mass and energy, is to introduce the $local$ $energy,$ which is the energy of a region interacting with all the rest of the plasma. In \cite{CCMP}, \cite {CMP} and successively in \cite{CCM2} and \cite{Rem}, it has been proved a bound on the local energy in terms of its initial value, which implies a control on the spatial density. In the setup of confined plasmas, from one side we have an extra difficulty, deriving from the singularity of the external confining Lorentz force, from the other side the system can be considered a one-dimensional system at infinity, which makes our job easier. We also quote  \cite{T} for a confined relativistic plasma in one dimension in space and two in the velocities, with bounded charge. An important  feature of the results \cite{CCM2, Rem} is that, once we prove that the velocities are bounded, we have as a consequence the confinement of the plasma. 

In the present paper we consider a case of unbounded total charge and unbounded velocities. We assume that the density has spatial support in the whole infinite cylinder, where it is slightly decaying (not integrably), and is fast decaying (gaussian) in the velocities.  We introduce a regularized system, called $the\ partial\ dynamics,$ in which the density has compact support in the velocities. In \cite{Rem} it is proved the existence and uniqueness of the time evolution for such system, and our aim is to  remove the compactness assumption, by letting the size of the velocity support go to infinity. By refining the estimate of the auto-induced electric field $E$ made in \cite{Rem}, we prove some uniform bounds on the partial dynamics and in particular we prove that $E$ is bounded. Since the Lorentz force does not affect the modulus of the velocities of the particles, this is sufficient to prove that any fixed particle has a bounded displacement and velocity, uniformly in the support. This allows us to state  that the limit time evolution does exist unique globally in time and that the system remains confined.

We now discuss some related problems. While here we consider an external magnetic field parallel to the symmetry axis of the cylinder, we could consider other external magnetic fields as well, always divergent on the border of the cylinder and tangent to it (hence giving a formal confinement of the plasma); in this case the magnetic lines would not be straight lines, but lines with some curvature as, for instance, cylindrical helices. Moreover, we could consider domains whose boundaries have non-zero gaussian curvature. A problem within this context can be studied, for finite mass and bounded velocities, and in fact an explicit investigation has been done in \cite{CCM3}, in the case of a torus. We remark that, at the moment, in other generic cases we are not able to reproduce results similar to those of the present paper.

\noindent We sketch the plan of the paper at the end of the next section.

\section{The equation and the main result}

The equation we consider is a Vlasov-Poisson equation, with an extra Lorentz force acting on the particles, in order to keep the plasma confined in a cylinder. Denoting by $x=(x_1,x_2,x_3)$ and $v=(v_1,v_2,v_3)$ the position and velocity vectors in ${\mathbb{R}^3}$ and by $f(x,v,t)$ the plasma density in the phase-space at time $t,$ we have:
\begin{equation}
\label{Eq}
\begin{cases}
\dsp \partial_t f(x,v,t)+v\cdot \nabla_x f(x,v,t)+(E(x,t)+ v\wedge B(x) ) \cdot \nabla_v f(x,v,t)=0  \\
\dsp E(x,t)=\int_{\R^3}\frac{x-y}{|x-y|^3} \ \rho(y,t) \, dy     \\
\dsp \rho(x,t)=   \int_{\R^3} f(x,v,t) \ dv \\
\dsp f(x,v,0)=f_0(x,v). \\
\end{cases}
\end{equation} 
 We consider an infinite, open cylinder $D$ of radius $A$ and symmetry axis  $x_1,$ that is
\begin{equation}
 D = \{x \in \R^3:   r<A  \} \qquad \qquad  r=\sqrt{x_2^2+x_3^2}
\end{equation}
and the confining vector field $B,$ diverging on the boundary of $D,$ is defined as  \begin{equation}
 B(x)= (h(r^2),0,0) \qquad \qquad h(r^2) = \frac{1}{(A^2-r^2)^\theta} \quad \quad \theta >2, \label{def_U}
 \end{equation}
where $\theta$ has been chosen large enough for further purposes (see eqn. (\ref{nu})).

Letting $X(t)=X(x,v,t)$ and $V(t)=V(x,v,t)$ represent position and velocity at time $t$ of a particle starting at time $t=0$ from $x$ with velocity $v,$ the related characteristics equations are
\begin{equation}
\label{ch}
\begin{cases}
\dsp  \dot{X}(t)= V(t)\\
\dsp  \dot{V}(t)= E(X(t),t)+V(t)\wedge B(X(t)) \\
\dsp ( X(0), V(0))=(x,v) \\
\dsp  f(X(t), V(t), t) = f_0(x,v).  \\
 \end{cases}
\end{equation}
 We also consider the sub-cylinder $D_0\subset D:$
\begin{equation}
 D_0 = \{x \in \mathbb R^3 :r<\bar A  \}  \qquad   \frac{A}{2} <\bar A<A.
\end{equation}
The following theorem states the main result of this paper.

\begin{theorem}
Let us fix an arbitrary positive time T. Let $f_0(x,v)$ be supported on $D_0\times \mathbb{R}^3$ and satisfy the two following assumptions:
\begin{equation}
0\leq f_0(x,v)\leq C_0 e^{- \lambda |v|^2}\label{G}
\end{equation}
and, for any $ i\in {\mathbb{Z}}/{\{0\}}$,
\begin{equation}
\int_{i\leq x_1\leq i+1}\rho(x,0)\ dx \leq C_1\frac{1}{|i|^\alpha}\quad \text{with}\quad  \alpha>\frac59\label{ass}
\end{equation}
for some positive constants $C_0,$ $C_1$ and $\lambda.$ 

\noindent  Then there exists a solution to system (\ref{ch}) in $[0, T]$ such that, 
for any $(x,v)\in D_0\times \mathbb{R}^3$,   $ \sup_{0\leq t\leq T}\sqrt{X_2(t)^2+X_3(t)^2}<A.$ Moreover, there exist positive constants $C_2,$ $\bar\lambda $ and $C_3$ such that 
 \begin{equation}
 0\leq f(x,v,t)\leq C_2e^{-\bar \lambda |v|^2}\label{dect}
 \end{equation}
and, for any $ i\in {\mathbb{Z}}/{\{0\}}$,
\begin{equation}
\int_{i\leq x_1\leq i+1}\rho(x,t)\ dx \leq C_3\frac{\log (1+ |i|)}{|i|^{\alpha}}.\label{asst}
\end{equation}
This solution is unique in the class of those satisfying (\ref{dect}) and (\ref{asst}).\label{th_02}
\end{theorem}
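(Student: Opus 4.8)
\medskip
\noindent\emph{Outline of the argument.}
The plan is to obtain the solution as a limit of the \emph{partial dynamics} of \cite{Rem}, obtained by replacing $f_0$ with $f_0^{N}(x,v)=f_0(x,v)\,\mathbf{1}_{\{|v|\le N\}}$. For every $N$ the velocities are compactly supported, so \cite{Rem} gives a unique solution $(X^{N},V^{N},f^{N},E^{N})$ on $[0,T]$ confined in $D$. All the estimates below are performed on this partial dynamics, and the whole point is that they will be obtained with constants \emph{independent of $N$}; letting $N\to\infty$ will then produce the solution in the statement. Throughout, $C$ denotes a constant depending only on $T,C_0,C_1,\lambda,\alpha,A,\bar A$, and $m_j(t)=\int_{j\le x_1\le j+1}\rho^{N}(x,t)\,dx$ denotes the charge of the $j$-th slab at time $t$; set $\mathcal E_N(t)=\sup_{0\le s\le t}\|E^{N}(\cdot,s)\|_\infty$.

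Two bounds are propagated in time along the partial dynamics. First, the Gaussian decay (\ref{dect}): since $f^{N}$ is constant along the characteristics (\ref{ch}) and the Lorentz force is orthogonal to the velocity, $\frac{d}{dt}|V^{N}(t)|^{2}=2\,V^{N}(t)\cdot E^{N}(X^{N}(t),t)$, so the speed of any particle varies by at most $\mathcal E_N(t)\,t$; hence $f^{N}(x,v,t)\le C_0\,e^{-\lambda(|v|-\mathcal E_N(t)t)_{+}^{2}}\le C_2\,e^{-\bar\lambda|v|^{2}}$ and, integrating in $v$, $\|\rho^{N}(\cdot,t)\|_\infty\le C\big(1+\mathcal E_N(t)^{3}\big)$. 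Second, the slab bound (\ref{asst}): since $B=(h(r^{2}),0,0)$ is parallel to the axis, the first component of $v\wedge B$ vanishes, so $\dot X_1^{N}=V_1^{N}$ with $\dot V_1^{N}=E_1^{N}(X^{N}(t),t)$, whence $|X_1^{N}(x,v,t)-x_1|\le (|v_1|+\mathcal E_N(t)t)\,t$. The phase-space vector field in (\ref{ch}) is divergence free in $(x,v)$, so the flow is measure preserving and $m_i(t)$ equals the $f_0^{N}$-mass of the data whose particle lies in the $i$-th slab at time $t$; this forces $x_1$ to lie in a slab $j$ with $|i-j|\lesssim (|v_1|+\mathcal E_N(t)t)\,t$. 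Splitting that mass into the slabs with $|i-j|$ below a threshold $R$ — bounded by $R\max_{|j-i|\le R}m_j(0)\lesssim R\,|i|^{-\alpha}$ via (\ref{ass}) — and the remaining ones — bounded by the Gaussian tail $C\int_{|v_1|\gtrsim R/t}e^{-\lambda|v|^{2}}dv$ — and optimizing $R\sim\sqrt{\log|i|}$, gives (\ref{asst}).

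The heart of the matter is a bound $\mathcal E_N(T)\le M$ uniform in $N$. For $x$ in the $i$-th slab, I would split $E^{N}(x,t)=\int\frac{x-y}{|x-y|^{3}}\rho^{N}(y,t)\,dy$ into a long-range part ($|x-y|>1$) and a short-range part ($|x-y|\le 1$). The long-range part is treated slab by slab: the $j$-th slab contributes $\lesssim m_j(t)(1+|i-j|)^{-2}$, so by (\ref{asst}) it is dominated by the uniformly convergent series $\sum_j\frac{\log(1+|j|)}{|j|^{\alpha}(1+|i-j|)^{2}}$, which furthermore vanishes as $|i|\to\infty$. The short-range part cannot be controlled by $\|\rho^{N}\|_\infty$ alone — this would only give a bound growing like $\mathcal E_N^{2}$, which does not close the estimate near the axis — and here one uses the \emph{local energy} method of \cite{CCMP,CMP,Rem}: the energy of a bounded region interacting with the whole plasma is bounded by its initial value — finite here because of the Gaussian profile of $f_0$ — plus a term growing slowly in time; combined with $\|f^{N}(\cdot,\cdot,t)\|_\infty\le C_0$ and the interpolation inequality for Vlasov densities this controls $\rho^{N}(\cdot,t)$ in $L^{5/3}$ of each slab and hence, interpolating with $\|\rho^{N}\|_\infty$, makes $\int_{|x-y|\le1}\rho^{N}(y,t)|x-y|^{-2}dy$ bounded. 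Collecting everything yields a closed integral inequality for $\mathcal E_N(t)$ whose solutions are bounded on $[0,T]$ uniformly in $N$; the condition $\alpha>\frac59$ is exactly what makes this inequality closable — heuristically, it forces the charge in a region of size $R$, which is $O(R^{1-\alpha})$ by (\ref{ass}), to be $o(R^{4/9})$, the threshold dictated by the Coulomb kernel and the above interpolation. This step — reconciling the Gaussian spreading of an a priori unbounded family of particles with the slow spatial decay of the charge — is the main obstacle, and is what the introduction calls ``refining the estimate of $E$''.

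Given $\mathcal E_N(T)\le M$, the rest is routine. Confinement: for each fixed $(x,v)$ the bound $\frac{d}{dt}|V^{N}|^{2}=2V^{N}\cdot E^{N}$ gives $|V^{N}(t)|\le |v|+MT$, and since $h(r^{2})$ diverges at $r=A$ with $\theta$ large (cf.\ (\ref{nu})) the mechanism of \cite{Rem} then yields $\sqrt{X_2^{N}(t)^{2}+X_3^{N}(t)^{2}}<A$ on $[0,T]$, the bound being uniform in $N$ since the divergence of $h$ overcomes any finite transverse velocity. Passage to the limit: for each $(x,v)$ the curves $t\mapsto(X^{N}(x,v,t),V^{N}(x,v,t))$ are equibounded and equi-Lipschitz — velocities $\le|v|+MT$, accelerations $\le M+|v\wedge B|$, the latter finite because $r$ stays strictly below $A$ — so Ascoli--Arzel\`a and a diagonal extraction give a limit flow; the fields $E^{N}$ are equibounded and equicontinuous on compact subsets of $D$ and converge, while $f^{N}\rightharpoonup f$, and one passes to the limit in (\ref{ch}) and in the propagated estimates, obtaining a solution satisfying (\ref{dect})--(\ref{asst}) and the confinement. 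Uniqueness in the class defined by (\ref{dect})--(\ref{asst}) follows as in \cite{Rem}: these bounds give enough control on the fields of any two such solutions to run a Gronwall estimate on $\sup_{(x,v)}\big(|X^{1}(t)-X^{2}(t)|+|V^{1}(t)-V^{2}(t)|\big)$, forcing the two dynamics to coincide.
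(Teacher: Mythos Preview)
The overall framework (partial dynamics, propagated bounds, limit) is correct, but the central claim --- the uniform-in-$N$ bound $\mathcal{E}_N(T)\le M$ --- has a genuine gap, and it is precisely where the paper's main technical work lies.

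The difficulty is this: the local-energy estimate $W^{N}(\mu,R,t)\le C\,W^{N}(\mu,R,0)$ (Proposition~\ref{www}) is available only at the scale $R=R^{N}(t)=1+\int_0^t\mathcal{V}^{N}(s)\,ds$, the maximal displacement of \emph{any} particle. Since $f_0^{N}$ contains particles with initial speed up to $N$, one has $\mathcal{V}^{N}(t)\ge N-\mathcal{E}_N(t)\,t$; so even under your a~priori hypothesis $\mathcal{E}_N\le M$ one still gets $R^{N}(t)\gtrsim Nt$ and hence $Q^{N}(R^{N}(t),t)\le C\,R^{N}(t)^{1-\alpha}\sim N^{1-\alpha}$. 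The pointwise bound this produces is $|E^{N}|\le C\,\mathcal{V}^{4/3}Q^{1/3}\le C\,\mathcal{V}^{(5-\alpha)/3}$ (Proposition~\ref{prop2}), with exponent $>1$ for every $\alpha<2$. Feeding this into $\mathcal{V}(t)\le N+\int_0^t|E^{N}|\,ds$ does \emph{not} close; your ``closed integral inequality for $\mathcal{E}_N(t)$'' therefore cannot give an $N$-independent bound, and the local-energy/$L^{5/3}$ interpolation you invoke is what furnishes exactly this insufficient pointwise estimate.

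What the paper actually proves is different and substantially harder: not a bound on $\sup|E^{N}|$ but on the \emph{time integral along a characteristic}, $\int_0^t|E^{N}(X^{N}(s),s)|\,ds\le C\,\mathcal{V}^{\gamma}$ with $\gamma<2/3$ (Proposition~\ref{field}). This requires a phase-space decomposition by relative axial velocity and transverse speed, a time-averaging over short intervals $\Delta_\ell$, and an iteration from small to large time scales (Proposition~\ref{media} and Section~5); the threshold $\alpha>5/9$ enters exactly here, to make the iteration gain a positive power (see the constant $c$ in (\ref{def_c})). The outcome is only $\mathcal{V}^{N}\le CN$, $\rho^{N}\le CN^{3\gamma}$, $|B(X^{N})|\le CN^{\theta/(\theta-1)}$ --- all growing with $N$. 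As a consequence your Ascoli--Arzel\`a step fails too: accelerations contain $V^{N}\wedge B(X^{N})$, which is not bounded uniformly in $N$.

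The passage to the limit in the paper is instead a quantitative Cauchy estimate (Section~3.2): one proves
\[
\sup_{(x,v)\in D_0\times b(N)}\Big(|X^{N}-X^{N+1}|+|V^{N}-V^{N+1}|\Big)\le C^{-N^{c}}
\]
by an iterated Gronwall in which the $N$-polynomial growth of the quasi-Lipschitz constant of $E^{N}$, of $\|\rho^{N}\|_\infty$, and of $|\nabla B|$ is beaten by the Gaussian smallness $e^{-\lambda N^{2}}$ of the new mass $f_0^{N+1}-f_0^{N}$. Only \emph{after} this Cauchy property does one telescope to the individual-particle bound $|V^{N}(t)-v|\le C(|v|+1)$ (eq.~(\ref{N_0})), and it is from this --- not the other way round --- that the uniform Gaussian decay, the uniform bound on $E^{N}$, the confinement bound on $B$, and the slab estimate (\ref{asst}) all follow. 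Your outline reaches these conclusions, but only by presupposing the uniform field bound that in the paper is the very last thing obtained.
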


\begin{remark}
We put in evidence the fact that assumption (\ref{ass}) does not imply that $\rho_0$ belongs to any $L^p$ space, as it can be satisfied even in case $\rho_0$ is only bounded, but supported over suitably sparse sets.
The fact that $\rho_0 \in L^\infty$ (and also $\rho(t)$) is a direct consequence of (\ref{G}) (and (\ref{dect}) respectively).
We also note that the case $\alpha > 1$ deals with finite total mass, whereas the main difficulties we face 
concern the infinite mass case.  Hence the  more interesting case for the present paper is
 realized for $5/9 <\alpha  \leq 1$.
\end{remark}

The strategy of the proof of this result is the following: we introduce a truncated dynamics, denoted as $partial\ dynamics,$ in which we assume that the initial datum $f_0$ has (infinite) spatial support in $D_0$ and compact support in the velocities. This allows us to make use of the results in \cite{Rem} to find a bound on the electric field. Then we are ready to prove that the partial dynamics is converging, as the size of the velocity support goes to infinity, to some limit dynamics, which satisfy the equations and the confinement globally in time. The theorem is proved in the next Section 3: in 3.1 we introduce the partial dynamics, and we state the main estimate on the electric field, which is an improvement of the analogous in \cite{Rem}; in 3.2 we show that the partial dynamics is converging as the compactness of the velocity support of $f_0$ is removed; in 3.3 we prove that the displacement and the velocity of a single particle are proportional to its initial velocity, independently of the initial support. This allows to complete the demonstration, proving that the limit dynamics satisfy the equations and the confinement. For the sake of clearness in the presentation of the result,  we postpone to Section 4 some estimates concerning the local energy, and to Section 5 the proof of the estimate of the electric field. In Section 6 it is proved the main estimate on the local energy, together with other technicalities. We report here the bound on the local energy for completeness, even if its proof does not differ from the one given in \cite{Rem}. 
\bigskip
\section{The proof of the Theorem}
\label{sec_proof}

\subsection{The partial dynamics}

\medskip

We introduce the partial dynamics, that is a sequence of differential systems in which the initial densities have compact velocity support: 
\begin{equation}
\label{chN}
\begin{cases}
\dsp  \dot{X}^{N}(t)= V^{N}(t)\\
\dsp  \dot{V}^{N}(t)= E^{N}(X^{N}(t),t)+V^{N}(t)\wedge B(X^{N}(t)) \\
\dsp ( X^{N}(0), V^{N}(0))=(x,v) \\
f_0^{N}(x,v) = f_0(x,v) \, \chi (b(N)) \\
 \end{cases}
\end{equation}
where $\chi$ is the characteristic function of the set $b(N) = \{ v \in \R^3: |v| < N \},$

$$
E^{N}(x,t)=\int_{} \frac{x-y}{|x-y|^3} \ \rho^{N}(y,t) \, dy, 
$$
$$
\rho^{N}(x,t)=\int_{\R^3} f^{N}(x,v,t) \, dv
$$
and
\begin{equation}
f^{N}(X^{N}(t),  V^{N}(t),  t) = f_0^{N}(x,v).
\label{liouv}
\end{equation}
  In \cite{Rem} it is proven the existence of a solution to system (\ref{chN}), provided $f_0$ satisfies the assumptions of Theorem 1, since $f_0^N$ has compact support in the velocities. Moreover, it is proven the confinement of the solution, that is
  \begin{equation}
\sup_{(x,v)\in D_0\times b(N)}  \sup_{0\leq t\leq T}\sqrt{X^N_2(t)^2+X^N_3(t)^2}<C_N A\label{new}
  \end{equation} 
where $C_N<1$ is a positive, $N$-dependent constant.

From now on all constants appearing in the estimates will be positive and 
possibly depending on the initial data and $T$, but not on $N$.
They will be denoted by $C$ and some of them will be numbered in order to be quoted elsewhere in the paper. 

We introduce the maximal velocity of a characteristic

 \begin{equation}
\label{maxV}
{\mathcal{V}}^N(t)=\max\left\{C_4,\,\sup_{s\in[0,t]}\sup_{(x,v)\in D_0\times b(N)}|V^N(s)|\right\}
\end{equation}
where $C_4$ is a constant that will be chosen large enough. 

We premise the following result on the partial dynamics, which is fundamental for the proof of Theorem \ref{th_02} and will be proved in Section 5.
\begin{proposition}
There exists $\gamma < 2/3$ such that
\begin{equation}
\int_0^t|E^N(X^N(s),s)|ds\leq C_5 \,{\mathcal{V}}^N(t)^\gamma.
\label{field_eq}
\end{equation}
\label{field}
\end{proposition}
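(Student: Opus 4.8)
The plan is to bound the time integral of the self-induced field along a partial characteristic by splitting the field into a near-field contribution and a far-field contribution, and controlling each by a combination of the $L^\infty$ bound on $\rho^N$ (a consequence of the gaussian bound (\ref{dect}) applied to $f^N$) and the slow-decay estimate (\ref{asst}) along the axis of the cylinder. Write $E^N(X^N(s),s)=\int \frac{X^N(s)-y}{|X^N(s)-y|^3}\rho^N(y,s)\,dy$ and decompose the domain of integration into the ball $\{|X^N(s)-y|<R\}$ and its complement, where $R=R(s)$ is a cutoff radius to be optimized (a power of $\mathcal V^N(t)$). On the near region, the singularity $|x-y|^{-2}$ is integrable in $\R^3$, so using $\rho^N(\cdot,s)\in L^\infty$ (bounded uniformly in $N$, from (\ref{G})–(\ref{dect})) one gets a bound of order $R$. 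On the far region, $|X^N(s)-y|^{-2}\le R^{-2}$, and here one uses that the plasma lives in the cylinder $D$ of fixed radius $A$, so the $y$-integral is effectively one-dimensional along $y_1$; summing the slab masses $\int_{i\le y_1\le i+1}\rho^N(y,s)\,dy\le C\,\frac{\log(1+|i|)}{|i|^\alpha}$ over $|i|\gtrsim R$ contributes a convergent-or-slowly-growing tail because $\alpha>5/9$, but we must be careful: for $5/9<\alpha\le 1$ the total mass is infinite, so the far-field bound will not be $O(R^{-2})\times(\text{finite mass})$ but rather $O(R^{-2})$ times a factor like $R^{1-\alpha}\log R$ coming from the tail sum, i.e. of order $R^{-1-\alpha}\log R$, which is still small. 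Integrating the pointwise bound $|E^N(X^N(s),s)|\le C\,R + C\,R^{-1-\alpha}\log R$ over $s\in[0,t]$ and optimizing $R$ in terms of $\mathcal V^N(t)$ — balancing against the displacement, which over time $t$ is at most $\mathcal V^N(t)\,T$ — should yield the exponent $\gamma<2/3$; the condition $\alpha>5/9$ is precisely what makes the optimized exponent fall below $2/3$.

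More carefully, the reason a naive bound is not enough, and why one must invoke the local-energy machinery of Sections 4 and 6 rather than just the crude $L^\infty$ bound, is that the near-field term $O(R)$ is too lossy: to beat $2/3$ one needs a better-than-$L^\infty$ control of $\rho^N$ on the relevant length scales. So the refined step is to replace the trivial estimate $\int_{|x-y|<R}\frac{\rho^N(y,s)}{|x-y|^2}\,dy\le \|\rho^N\|_\infty\cdot CR$ by one that brings in the local kinetic energy: split further into $|x-y|<\delta$ and $\delta\le|x-y|<R$, estimate the innermost ball by $\|\rho^N\|_\infty\delta$, and the intermediate shell by Hölder/Cauchy–Schwarz against $\int \rho^N\,(\text{something})$, using that $\rho^N(y,s)=\int f^N\,dv$ together with the gaussian-in-$v$ bound to trade integrability in $v$ for a bound involving the local energy density $\int |v|^2 f^N\,dv$. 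This is exactly the structure by which \cite{CCMP,CMP,Rem} convert a bound on the local energy into a bound on $\rho^N$ that improves over $L^\infty$ at short scales; invoking the local-energy bound (stated for completeness later in the paper, proved as in \cite{Rem}) then feeds back a quantitative improvement on $\int_0^t|E^N|\,ds$.

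The key steps, in order, are therefore: (i) record the uniform-in-$N$ bound $0\le f^N(x,v,t)\le C_2 e^{-\bar\lambda|v|^2}$ and the slab estimate (\ref{asst}) for $\rho^N$, which hold by the results of \cite{Rem} applied to the compactly-supported-in-$v$ datum $f_0^N$; (ii) for fixed $s$, decompose $E^N(X^N(s),s)$ into near, intermediate, and far zones with radii $\delta\le R$ to be chosen; (iii) estimate the far zone by the one-dimensional tail sum of slab masses, exploiting confinement in the cylinder of radius $A$ and $\alpha>5/9$; (iv) estimate the intermediate and inner zones using $\|\rho^N(\cdot,s)\|_\infty\le C$ together with the local-energy bound to get an improved short-scale estimate; (v) integrate in $s\in[0,t]$, bounding the displacement of the characteristic by $\mathcal V^N(t)\,T$ wherever it enters the geometry, and optimize $\delta,R$ as powers of $\mathcal V^N(t)$; (vi) check that the resulting exponent $\gamma$ satisfies $\gamma<2/3$, which is where the hypothesis $\alpha>5/9$ is consumed. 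The main obstacle is step (iv)–(vi): getting the short-distance contribution low enough to push $\gamma$ strictly below $2/3$ requires the full local-energy estimate and a careful bookkeeping of how $\mathcal V^N(t)$ enters both the cutoff radii and the time integration — this is precisely the "refinement of the estimate in \cite{Rem}" advertised in the introduction, and everything else is comparatively routine.
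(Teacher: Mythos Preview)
Your approach has a genuine gap: a pointwise-in-$s$ bound on $|E^N(X^N(s),s)|$, however refined by local-energy considerations, cannot yield $\gamma<2/3$ after integration over $[0,t]$. The paper's own pointwise bound (Proposition~\ref{prop2}) already incorporates the local energy via the $\rho^{5/3}$ estimate and gives $|E^N|\le C\,\mathcal{V}^{4/3}Q^{1/3}\le C\,\mathcal{V}^{(5-\alpha)/3}$; for any $\alpha\le 1$ this exponent exceeds $4/3$, so integrating trivially in time can never produce an exponent below $2/3$. Your near/intermediate/far split in $|x-y|$ with cutoffs $\delta,R$ is essentially the content of Proposition~\ref{prop2} and stops there. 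A secondary issue: your step (i) takes as input the uniform-in-$N$ gaussian bound $f^N(x,v,t)\le C_2e^{-\bar\lambda|v|^2}$ and the slab decay (\ref{asst}) for $\rho^N(\cdot,t)$, but in the paper these are \emph{consequences} of Proposition~\ref{field} (via (\ref{N_0}) and (\ref{N_1})), not available beforehand; invoking them here is circular.

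The missing idea is a decomposition in \emph{velocity} space combined with a genuine time-averaging that exploits the relative motion of characteristics. The paper splits the $(y,w)$-integration according to whether $|v_1-w_1|$ is small (set $S_1$), $|w_r|$ is small ($S_2$), or neither ($S_3$). The first two contributions are handled pointwise and already give an exponent below $2/3$; the dangerous term is $\mathcal{I}_3$, and it is controlled only on average over a short interval $\Delta_1\sim\mathcal{V}^{-4/3-\eta}Q^{-1/3}$. The key mechanism is Lemma~\ref{lemrect}: when $|V_1-W_1|\gtrsim h\,\mathcal{V}^{-\eta}$, the separation $|X(s)-Y(s)|$ grows linearly in $|s-t_0|$, so $\int_{\Delta_1}|X(s)-Y(s)|^{-2}\,ds$ is far smaller than $\Delta_1\cdot\sup_s|X(s)-Y(s)|^{-2}$. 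This yields (\ref{avern}) at level $\ell=1$; an induction then propagates the gain to longer intervals $\Delta_\ell=\mathcal{G}^{\ell-1}\Delta_1$ until $\Delta_{\bar\ell}$ is of order one, after which a finite partition of $[0,T]$ finishes the proof. The hypothesis $\alpha>5/9$ enters precisely in making the exponent $c$ in (\ref{def_c}) positive, which is what makes each iteration step a genuine improvement. None of this structure---the velocity-space decomposition, the rectilinear lemma, or the $\Delta_\ell$-iteration---appears in your proposal.
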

As a consequence, the following holds:
\begin{corollary}
\begin{equation}
{\mathcal{V}}^N(T) \leq C N
\label{V^N}
\end{equation}
\begin{equation}
\rho^N(x, t) \leq C N^{3\gamma} 
\label{rho_t}
\end{equation}
\begin{equation}
|B(X^N(t))|\leq CN^{\frac{\theta}{\theta -1}},
\label{B}
\end{equation}
being $\gamma$ the exponent in (\ref{field_eq}) and $\theta$ the one in (\ref{def_U}).
\label{coro}\end{corollary}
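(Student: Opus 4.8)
The plan is to feed the bound of Proposition~\ref{field} into the characteristic equations and close a Gronwall-type argument. First I would estimate the velocity growth. Since the magnetic part $V^N(t)\wedge B(X^N(t))$ is orthogonal to $V^N(t)$, it contributes nothing to $\frac{d}{dt}|V^N(t)|$, so for any characteristic starting from $(x,v)\in D_0\times b(N)$ we get
\begin{equation}
|V^N(t)| \leq |v| + \int_0^t |E^N(X^N(s),s)|\,ds \leq N + C_5\,{\mathcal{V}}^N(t)^\gamma .
\end{equation}
Taking the supremum over $(x,v)\in D_0\times b(N)$ and over $s\in[0,t]$ on the left, and using that the right side is already a supremum-type quantity, yields ${\mathcal{V}}^N(t)\leq \max\{C_4, N + C_5{\mathcal{V}}^N(t)^\gamma\}$. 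Because $\gamma<2/3<1$, the term $C_5{\mathcal{V}}^N(t)^\gamma$ is sublinear in ${\mathcal{V}}^N(t)$, so this inequality forces ${\mathcal{V}}^N(t)\leq C\max\{1,N\}\leq CN$ for all $t\in[0,T]$ (absorbing $C_4$ and $C_5$ into $C$); this is precisely (\ref{V^N}). The only subtlety is that ${\mathcal{V}}^N$ should be shown to be finite and continuous in $t$ to start the bootstrap, which follows from the already-established existence and confinement (\ref{new}) of the partial dynamics from \cite{Rem}.

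Next, for (\ref{rho_t}) I would use the Liouville-type conservation (\ref{liouv}) together with the gaussian bound (\ref{G}) on $f_0$. By (\ref{liouv}) and the definition of $f_0^N$ we have $f^N(x,v,t)\leq C_0 e^{-\lambda|\vv|^2}$ where $\vv$ is the initial velocity of the characteristic through $(x,v)$ at time $t$; since velocities cannot grow faster than ${\mathcal{V}}^N(t)$, any particle at phase point $(x,v)$ at time $t$ had initial speed at least $|v|-C_5{\mathcal{V}}^N(t)^\gamma$, hence $f^N(x,v,t)\leq C_0 e^{-\lambda(|v|-C N^\gamma)_+^2}$. Integrating in $v$: the contribution of $|v|\leq 2CN^\gamma$ is bounded by $C(N^\gamma)^3=CN^{3\gamma}$, and the tail $|v|>2CN^\gamma$ integrates a genuine gaussian and contributes $O(1)$. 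This gives $\rho^N(x,t)\leq CN^{3\gamma}$, which is (\ref{rho_t}). (Alternatively one can bound $\rho^N$ by $\|f^N\|_\infty$ times the volume of the velocity support, which at time $t$ has radius $\leq {\mathcal{V}}^N(t)\leq CN$, giving the cruder $CN^3$; the gaussian argument is what produces the sharper exponent $3\gamma$.)

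Finally, (\ref{B}) follows from (\ref{def_U}) and the confinement estimate (\ref{new}) once it is quantified. We have $|B(X^N(t))| = (A^2 - r^N(t)^2)^{-\theta}$ with $r^N(t) = \sqrt{X_2^N(t)^2 + X_3^N(t)^2}$, so we need a lower bound on $A^2 - r^N(t)^2$, equivalently an upper bound on how close to the boundary a characteristic can get. This is the step I expect to be the real obstacle: it requires combining the velocity bound (\ref{V^N}) with the structure of the confining field. The idea is that the radial motion is governed by the balance between the radial component of $E^N$ (bounded via Proposition~\ref{field}) and the magnetic restoring effect, which becomes stronger the closer one is to $r=A$; a particle moving radially outward with speed $\leq CN$ is turned back by the field whose magnitude is $(A^2-r^2)^{-\theta}$, and matching energy/momentum along the radial direction gives $A^2 - r^N(t)^2 \geq c\,N^{-1/(\theta-1)}$ for a suitable constant (this is where $\theta$ enters, and why $\theta$ was chosen large — it controls the exponent $\nu$ referenced at (\ref{nu})). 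Raising this to the power $-\theta$ gives $|B(X^N(t))|\leq CN^{\theta/(\theta-1)}$, which is (\ref{B}). I would carry out this radial estimate carefully using the $x_1$-directed form of $B$ so that $V^N\wedge B$ has no $x_1$-component and acts purely in the $(x_2,x_3)$-plane, making the two-dimensional radial analysis self-contained.
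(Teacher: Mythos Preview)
Your arguments for (\ref{V^N}) and (\ref{rho_t}) are correct and match the paper's proof almost exactly; the paper works with $|V^N|^2$ rather than $|V^N|$, arriving at $|V^N(t)|^2\leq N^2+C{\mathcal{V}}^N(t)^{1+\gamma}$ and closing via $1+\gamma<2$, but this is equivalent to your version. Your derivation of (\ref{rho_t}) from the backward velocity bound $|\bar v|\geq |v|-CN^\gamma$ is likewise the same idea as the paper's, only slightly more streamlined.

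The gap is in your treatment of (\ref{B}). Your heuristic of a ``radial energy/momentum balance'' and a Larmor-radius-type turning argument does not by itself produce the quantitative bound $A^2-r^2\geq cN^{-1/(\theta-1)}$; the magnetic force $V\wedge B=(0,V_3h,-V_2h)$ rotates the transverse velocity but does no work and has no obvious radial ``potential'', so a direct radial energy estimate does not close. What the paper actually uses is an \emph{angular-momentum identity} about the cylinder axis: writing the component equations one finds
\[
\frac12\,\frac{d}{dt}H\big(r^2(t)\big)=\frac{d}{dt}\big(V_2X_3-V_3X_2\big)+X_2E_3-X_3E_2,
\]
where $H$ is a primitive of $h$. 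Integrating over $[0,t]$ and using $|X_2|,|X_3|\leq A$, the velocity bound (\ref{V^N}), and Proposition~\ref{field} for $\int_0^t|E^N|\,ds$, gives $H(r^2(t))\leq CN$. Since $H(r^2)\sim(A^2-r^2)^{-(\theta-1)}$, this yields $A^2-r^2\geq cN^{-1/(\theta-1)}$ and hence $|B(X^N(t))|=(A^2-r^2)^{-\theta}\leq CN^{\theta/(\theta-1)}$. The conserved-modulo-$E$ quantity $\tfrac12 H(r^2)-(V_2X_3-V_3X_2)$ is the missing ingredient in your plan; once you write it down, the rest is exactly the bookkeeping you describe.
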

\begin{proof}
To prove (\ref{V^N}) we observe that the external Lorentz force does not affect the modulus of the particle velocities, being
\begin{equation}
\frac{d}{dt}\frac{|V^N(t)|^2}{2}=V^N(t) \cdot  E^N(X^N(t), t). \label{magn}
\end{equation}

Hence
\begin{equation}
|V^N(t)|^2=|v|^2+2\int_0^tV^N(s) \cdot  E^N(X^N(s), s)\,ds.\label{v^2}
\end{equation}

This fact, by Proposition \ref{field} and the choice of the initial data such that $v\in b( N),$ implies 
\begin{equation}
|V^N(t)|^2 \leq N^2+C \left[{\mathcal{V}}^N(t)\right]^{\gamma+1}.
\end{equation} 
Hence, since $\gamma+1<2,$ by taking the $\sup_{t\in [0,T]}$ we obtain the thesis.

 Now we prove (\ref{rho_t}). Putting
 $$
 (\bar{x}, \bar{v})=\left(X^N(x,v,t),V^N(x,v,t)\right),$$
by using the invariance of the density along the characteristics we have 
\begin{equation*}
\rho^N(\bar{x}, t) =\int f(\bar{x}, \bar{v},t)\ d\bar{v}=\int f_0(x,v)\ d\bar{v}.
\end{equation*}
We notice that, putting
$$
\widetilde{V}^N(t)=\sup_{0\leq s\leq t}|V^N(s)|,
$$
 from (\ref{v^2}), Proposition  \ref{field} and (\ref{V^N}) it follows
\begin{equation}
\begin{split}
|v|^2\geq |V^N(t)|^2-C_6\widetilde{V}^N(t) N^{\gamma}.\end{split}\label{VN}
\end{equation}
Hence, we decompose the integral as follows
\begin{equation}
\begin{split}
&\rho^N(\bar{x}, t)\leq\\&\int_{\widetilde{V}^N(t)\leq 2C_6 N^{\gamma}} f_0(x,v)\ d\bar{v}+C_0\int_{\widetilde{V}^N(t)> 2C_6 N^{\gamma}} e^{-\lambda |v|^2}\ d\bar{v}\\& \leq   CN^{3\gamma}+  C_0\int_{\widetilde{V}^N(t)> 2C_6 N^{\gamma}} e^{-\lambda |v|^2}\ d\bar{v},   \label{f_0}
\end{split}
\end{equation}
being by definition $|\bar{v}|\leq \widetilde{V}^N(t).$

\noindent By (\ref{VN}), if $\widetilde{V}^N(t)> 2C_6 N^{\gamma},$ then $$|v|^2\geq |V^N(t)|^2-\frac{[ \widetilde{V}^N(t)]^2}{2}.$$
Since the inequality holds for any $t\in [0,T],$ it holds also at the time in which $V^N$ reaches its maximal value over $[0,t],$ that is 
$$
|v|^2 \geq  [\widetilde{V}^N(t)]^2-\frac{[ \widetilde{V}^N(t)]^2}{2}= \frac{[\widetilde{V}^N(t)]^2}{2}\geq \frac{| {V}^N(t)|^2}{2}.$$
Hence from (\ref{f_0}) it follows
\begin{equation}
\begin{split}
&\rho^N(\bar{x}, t)\leq CN^{3\gamma}+C_0 \int e^{-\frac{\lambda}{2}|\bar{v}|^2}\ d\bar{v} \leq CN^{3\gamma}.\end{split}
\end{equation}

We prove (\ref{B}) by an analogous argument to the one used in \cite{Rem} to prove the confinement of the plasma. Writing by components equations (\ref{chN}), after elementary manipulation we get, omitting for simplicity the argument $t$ and the index $N,$ after putting $r(t)=\sqrt{X_2(t)^2+X_3(t)^2}:$
\begin{equation}
\begin{split}
&(V_2X_2+V_3X_3)h(r^2)=\dot{V}_2X_3-\dot{V}_3X_2+X_2E_3-X_3E_2.\label{h}
\end{split}
\end{equation}
 Let $H$ be a primitive of $h.$ By integrating in time by parts over $[0,t],$ for any $t\in [0,T],$ we obtain by (\ref{h}),
\begin{equation}
\begin{split}
&\frac12  \left[H(r^2(t))-H(r^2(0))\right]=\frac12 \int_0^{t}  \frac{d}{ds} H(r^2(s))ds=\int_0^{t} h(r^2(s))r(s)\dot{r}(s)\,ds=\\&
\int_0^{t} ds \left[ \dot{V}_2(s)X_3(s)-\dot{V}_3(s)X_2(s)+X_2(s)E_3(s)-X_3(s)E_2(s)\right]=\\&\left[V_2(s)X_3(s)-V_3(s)X_2(s)\right]_0^{t}+\int_0^{t}\left[X_2(s)E_3(X(s),s)-X_3(s)E_2(X(s),s)\right] ds.
\end{split}\label{eque}
\end{equation}
Recalling that the initial data are such that $H(r^2(0))< C,$ by (\ref{V^N}), (\ref{new}) and Proposition \ref{field} we get
\begin{equation}
H(r^2(t))\leq CNA+AC N^\gamma\leq CN.
\end{equation}
From this, by the definition of the field $B,$ it follows the thesis.\end{proof}

\bigskip

\subsection{Convergence of the partial dynamics}
\label{conv_pd}

\begin{remark} We stress that estimate (\ref{B}) allows us to make explicit the constant in (\ref{new}), that is, for a fixed $N,$ we have that 
$$
A-\sup_{(x,v)\in D_0\times b(N)}  \sup_{0\leq t\leq T} \sqrt{X^N_2(t)^2+X^N_3(t)^2} >CN^{-\frac{1}{\theta -1}}:=A(1-C_N).
$$
 This implies that all the spatial integrals in the sequel of this section have to be intended over the infinite cylinder $D,$ that is three-dimensional over small sets and one-dimensional over large sets.
\end{remark}

We fix a couple $(x,v)\in  D_0\times b(N)$ and we consider $X^N(t)$ and $X^{N+1}(t),$ that is the time evolved characteristics, both starting from this initial condition, in the  different dynamics relative to the initial distributions $f_0^N$ and $f_0^{N+1}.$  We have
\begin{equation}
\begin{split}
&|X^N(t)-X^{N+1}(t)| =\\&  \, \bigg| \int_0^t dt_1 \int_0^{t_1} dt_2 \, \Big[E^N\left(X^N(t_2), t_2\right)
+V^N(t)\wedge B\left(X^{N}(t_2)\right)   \\
&  -  E^{N+1}\left(X^{N+1}(t_2), t_2\right)
-V^{N+1}(t)\wedge B\left(X^{N+1}(t_2)\right)  \Big] \bigg| \leq   \\
&\int_0^t dt_1 \int_0^{t_1} dt_2 \, \left[ \mathcal{F}_1(x,v,t_2) + \mathcal{F}_2(x,v,t_2) + \mathcal{F}_3(x,v,t_2)\right],
\end{split}
\label{iter_}
\end{equation}
where
\begin{equation}
\mathcal{F}_1(x,v,t) = \left|E^N\left(X^N(t), t\right)
-E^N\left(X^{N+1}(t), t\right)\right|, 
\end{equation}
\begin{equation}
\mathcal{F}_2(x,v,t) = \left|E^N\left(X^{N+1}(t), t\right)
-E^{N+1}\left(X^{N+1}(t), t\right)\right|,
\end{equation}
and
\begin{equation}
\mathcal{F}_3(x,v,t) = \left|V^N(t)\wedge B\left(X^N(t)\right)
-V^{N+1}(t)\wedge B\left(X^{N+1}(t)\right)\right|. 
\end{equation}

We set
$$
\delta^N(t) =\sup_{(x,v)\in D_0\times b(N)} |X^N(t)-X^{N+1}(t)|
$$
$$
\eta^N(t)=\sup_{(x,v)\in D_0\times b(N)} |V^N(t)-V^{N+1}(t)|.
$$

Let us start by estimating the term $\mathcal{F}_1.$ We will prove a quasi-Lipschitz property for $E^N.$ Let us put $|x-y|:= d.$ In case $d\geq 1,$ recalling Remark 2, by (\ref{rho_t}) we have
\begin{equation*}
|E^N(x,t)-E^N(y,t)|\leq\int_{} \left(\frac{\rho^N(z,t)}{|x-z|^2}+\frac{\rho^N(z,t)}{|y-z|^2}\right)  dz\leq CN^{3\gamma}\leq CN^{3\gamma}d.
\end{equation*}
In case $d<1,$  we define $\bar{z}=\frac{x+y}{2}$ and decompose the integral as follows:
\begin{equation}
|E^N(x,t)-E^N(y,t)|\leq  C\sum_{i=1}^3 I_i\label{s}
\end{equation}
where

$$
 I_1=\int_{|z-\bar{z}|\leq 2d} \left(\frac{1}{|x-z|^2}+\frac{1}{|y-z|^2}\right) \rho^N(z,t) dz
 $$
$$
 I_2=\int_{2d<|z-\bar{z}|\leq \frac{2}{d}} \left|\frac{1}{|x-z|^2}-\frac{1}{|y-z|^2}\right| \rho^N(z,t) dz
 $$
$$
 I_3=\int_{|z-\bar{z}|\geq \frac{2}{d}} \left(\frac{1}{|x-z|^2}-\frac{1}{|y-z|^2}\right) \rho^N(z,t) dz.
 $$

If $ |z-\bar{z}|\leq 2d$ then $|x-z|\leq 3d$ and $|y-z|\leq 3d.$ Hence, always by (\ref{rho_t}), 
\begin{equation}
I_1\leq CN^{3\gamma}\left(\int_{|x-z|\leq 3d} \frac{dz}{|x-z|^2}+\int_{|y-z|\leq 3d}\frac{dz}{|y-z|^2}  \right)\leq C N^{3\gamma} d.
\label{Ai1}\end{equation}
For the term $I_2$ we have, by the Lagrange theorem: 
\begin{equation}
 I_2\leq CN^{3\gamma}d\int_{2d<|z-\bar{z}|\leq \frac{2}{d}} \frac{1}{|z-\xi|^3}dz\leq CN^{3\gamma}d|\log d|\label{D},
\end{equation}
where $\xi=tx +(1-t)y$ and $t\in [0,1].$ 

Finally, if $|z-\bar{z}|\geq \frac{2}{d},$ then min$\{|x-z|,\ |y-z|\}\geq \frac{1}{d},$ so that 
\begin{equation}
 I_3\leq CN^{3\gamma}d\int_{|z-\xi|\geq \frac{1}{d}} \frac{1}{|z-\xi|^3}dz\leq CN^{3\gamma}d,
 \end{equation}
recalling again the Remark 2. Hence by (\ref{s}) and the estimates of the terms $I_i,$ $i=1,2,3,$ we have shown that
\begin{equation}
\mathcal{F}_1(x,v,t)\leq  CN^{3\gamma}\delta^N(t)|\log \delta^N(t)|. \label{f1}
\end{equation}

Now we draw our attention to the term $\mathcal{F}_2.$

We put $\bar{X}=X^{N+1}(t)$ and we have:
\begin{equation}
\mathcal{F}_2(x,v,t) \leq \mathcal{F}_2'(x,v,t)+\mathcal{F}_2''(x,v,t),
\label{I_2}
\end{equation}
where
\begin{equation}
\mathcal{F}_2'(x,v,t)=\int_{|\bar X - y|\leq 2\delta^N(t)} \frac{  \rho^N(y,t)+\rho^{N+1}(y,t)}{|\bar X-y|^2} \, dy 
\end{equation}
and 
\begin{equation}
\mathcal{F}_2''(x,v,t)=\left|\int_{|\bar X - y|> 2\delta^N(t)} \frac{  \rho^N(y,t)-\rho^{N+1}(y,t)}{|\bar X-y|^2} \, dy \right|.
\end{equation}

By the bound (\ref{rho_t}) it is
\begin{equation}
\begin{split}
&\mathcal{F}_2'(x,v,t)\leq  C N^{3\gamma}\int_{|\bar X - y|\leq 2\delta^N(t)}  \frac{1}{|\bar X - y|^{2}} \, dy  
 \leq  CN^{3\gamma}\delta^N(t).
\end{split} \label{I_2'}
\end{equation}

Now we pass to the term $\mathcal{F}_2''.$ We put $$(Y^N(t),W^N(t))=(X^N(y,w,t), V^N(y,w,t)).$$ Since by the Liouville theorem $dydw=dY^N(t)dW^N(t),$ by the invariance of the density $f^N$ along the characteristics, putting  $$S^i(t)=\{( y, w):|{\bar X}- Y^i(t)|\geq 2\delta^N(t)\}, \quad i=N, N+1,$$ we get
\begin{equation}
\begin{split}
&\mathcal{F}_2''(x,v,t)\leq  \\
&\int d y\int dw\left|\frac{f_0^N( y, w)}{|\bar X- Y^N(t)|^{2}}\chi(S^N(t))-\frac{f_0^{N+1}(y,w)}{|\bar X-Y^{N+1}(t)|^{2}}\chi(S^{N+1}(t)) \right|  \\
 &\leq \mathcal{I}_1+\mathcal{I}_2+\mathcal{I}_3,  
\end{split}
\label{I_2''}
\end{equation}
where
\begin{equation}
\mathcal{I}_1= \int_{S^N(t)} d y  
 \int d w \left| \frac{1}{|\bar X -  Y^N(t)|^{2}}
-\frac{1}{|\bar X -  Y^{N+1}(t)|^{2}} \right| f_0^N( y,w),
\end{equation}
\begin{equation}
\mathcal{I}_2= \int_{S^{N+1}(t)} d y   
  \int d w  \frac{ \left| f_0^N( y,w) - f_0^{N+1}( y, w) \right|}{|\bar X -  Y^{N+1}(t)|^{2}}  \, 
  \end{equation}
  and 
  \begin{equation}
  \mathcal{I}_3=
  \int_{S^N(t)\setminus S^{N+1}(t)}dy\int dw\frac{f_0^N(y,w)}{\left|\bar{X}-Y^{N+1}(t)\right|^2}.
  \end{equation}
  By the Lagrange theorem 
 \begin{eqnarray}
&&\mathcal{I}_1\leq \int_{S^N(t)} dy\int dw \, \frac{ f_0^N(y,w)}{|\bar X - \xi^N(t) |^{3}} |Y^{N}(t)- Y^{N+1}(t)| \,
\end{eqnarray}
where $\xi^N(t)$ is a point of the segment joining  $Y^{N}(t)$ and $Y^{N+1}(t).$      
Note that if $y\in S^N(t)$ then, by the definition of $\delta^N(t),$
\begin{equation}
|\bar X - Y^{N+1}(t)| > |\bar X - Y^N(t)| - |Y^N(t) - Y^{N+1}(t)| > \delta^N(t),\label{>}
\end{equation} which implies that $|\bar X - \xi^N(t)|$ is certainly bigger than $\frac12 |\bar X - Y^{N}(t)|.$
Hence by (\ref{rho_t}) it follows
\begin{eqnarray}
&&\mathcal{I}_1 \leq 8\delta^N(t) \int_{S^N(t)}dy\int dw \, \frac{ f_0^N(y,w)}{|\bar X - Y^N(t) |^{3}}= \,
  \nonumber \\
&& 8\delta^N(t) \int_{S^N(t)} dY^N(t)\int dW^N(t)\, \frac{ f^N(Y^N(t),W^N(t),t)}{|\bar X - Y^N(t) |^{3}}\leq \,
\nonumber \\
&&  C N^{3\gamma}\delta^N(t) \int dy  \, \frac{\chi(|{X}- y|\geq 2\delta^N(t))}{|\bar X - y |^{3}}  \leq\nonumber \\
&&
 C N^{3\gamma} \delta^N(t)(1+|\log \delta^N(t)|), \label{F1}
\end{eqnarray}
recalling that the integral is over the cylinder $D.$
It is easily seen that, for any positive $a<1$ and $\epsilon<1$ it holds
$$
 a(1+|\log a|)\leq a|\log \epsilon|+\epsilon.
 $$
Hence, if $\delta^N(t)< 1,$ we have
\begin{equation}
\mathcal{I}_1\leq C N^{3\gamma}\left(\delta^N(t)|\log \epsilon|+\epsilon\right).
\end{equation}
If $\delta^N(t)\geq 1,$ estimate (\ref{F1}) gives us
\begin{equation}
\mathcal{I}_1\leq C N^{3\gamma}\delta^N(t).
\end{equation}

We choose $\epsilon =e ^{-\lambda N^2},$ and in both cases it results
\begin{equation}
\mathcal{I}_1\leq C N^{3\gamma+2}\delta^N(t)+ Ce ^{-\frac{\lambda}{2} N^2}. \label{T_1}
\end{equation}
Let us now estimate the term $\mathcal{I}_2.$ By the choice of the initial condition it follows
\begin{eqnarray}
&&\mathcal{I}_2 \leq 2\int_{S^{N+1}(t)} dy \int dw \, \frac{ f_0^{N+1}(y,w)}{|\bar X - Y^{N+1}(t)|^{2}}  \  \chi(N\leq |w|\leq N+1)  \nonumber  \\
&&\leq 2C_0e^{-\lambda N^2}\int dy \int dw\, \frac{ \chi( |w|\leq N+1)}{|\bar X - Y^{N+1}(t)|^{2}}  \   \nonumber  \\
&&\leq Ce^{-\lambda N^2}\int dY^{N+1}(t) \int d W^{N+1}(t) \frac{\chi( |W^{N+1}(t)|\leq C N)}{|\bar X - Y^{N+1}(t)|^{2}}  \    \nonumber  \\
&&\leq CN^3e^{-\lambda N^2}\int dy \frac{1}{|\bar X -y|^2} \leq CN^3e^{-\lambda N^2}\leq C e ^{-\frac{\lambda}{2} N^2},\label{T_2}
\end{eqnarray}
where we have used the bound (\ref{V^N}) on the maximal velocity and again the fact that the integral is over the cylinder $D$.

Let us estimate the term $\mathcal{I}_3.$ Formula (\ref{>}) implies that
\begin{equation}
\begin{split}
\mathcal{I}_3\leq\,&\frac{1}{\delta^N(t)^2}\int dy\int dw f^N_0(y,w)\chi(S^N(t)\setminus S^{N+1}(t)).\end{split}
\end{equation}
Now we observe that 
\begin{equation}
\begin{split}&S^N(t)\setminus S^{N+1}(t)=\\&\left\{ (y,w):\left|\bar{X}-Y^N(t)\right|\geq 2 \delta^N(t) \right\}\cap \left\{  (y,w):\left|\bar{X}-Y^{N+1}(t)\right|\leq 2\delta^N(t)\right\}.\end{split}
\end{equation}
Hence, putting $ (Y^N(t),W^N(t))=(\bar{y},\bar{w})$ and recalling once again the definition of $\delta^N(t),$ we have
\begin{equation}
\mathcal{I}_3\leq \frac{1}{\delta^N(t)^2}\int_{A^N(t)} d\bar{y}\int d\bar{w}f(\bar{y},\bar{w},t)
\end{equation}
where $A^N(t)=\{\bar{y}:\delta^N(t)\leq\left|\bar{X}-\bar{y}\right|\leq 3\delta^N(t)\}.$
This together with estimate (\ref{rho_t}) implies that
\begin{equation}
\mathcal{I}_3\leq \frac{1}{\delta^N(t)^2}\int_{A^N(t)} d\bar{y}\rho(\bar{y},t)\leq C\frac{1}{\delta^N(t)^2}N^{3\gamma}\delta^N(t)^3\leq CN^{3\gamma}\delta^N(t) .\label{T_3}
\end{equation}
Going back to (\ref{I_2''}), from estimates (\ref{T_1}), (\ref{T_2}) and (\ref{T_3}) it follows
\begin{equation}
\mathcal{F}_2''(x,v,t)\leq  C N^{3\gamma+2}\delta^N(t)+ Ce ^{-\frac{\lambda}{2} N^2}.\label{concl}
\end{equation}
 so that this last estimate, (\ref{I_2}) and (\ref{I_2'}) imply
\begin{equation}
\mathcal{F}_2(x,v,t)\leq  C  N^{3\gamma+2}\delta^N(t)+Ce ^{-\frac{\lambda}{2} N^2}.\label{F_2}
\end{equation}

Finally we estimate the term $\mathcal{F}_3.$ We have
\begin{equation}
\begin{split}
\mathcal{F}_3(x,v,t) \leq &\ |V^N(t)|| B(X^{N}(t))-B(X^{N+1}(t)|+\\&
\ |V^N(t)-V^{N+1}(t)|| B(X^{N+1}(t))|. 
\end{split}\end{equation}
By applying the Lagrange theorem we have
\begin{equation*}
| B(X^{N}(t))-B(X^{N+1}(t)|\leq C\frac{|X^{N}(t)-X^{N+1}(t)|}{|A^2-|\xi^N(t)|^2|^{\theta+1}}
\end{equation*}
where $\xi^N(t)$ is a point on the segment joining $X^N(t)$ and $X^{N+1}(t).$ Due to the bound (\ref{B}), it has to be $|A-\xi^N(t)|\geq \frac{1}{CN^{\frac{1}{\theta -1}}}.$
 Hence
 \begin{equation}
 | B(X^{N}(t))-B(X^{N+1}(t)|\leq CN^{\frac{\theta +1}{\theta -1}}\delta^N(t).
\end{equation}
This, together with the bounds (\ref{V^N}) and (\ref{B}), imply
\begin{equation}
\begin{split}
\mathcal{F}_3(x,v,t)& \leq \,C\left[N^{\frac{2\theta}{\theta -1}} \delta^N(t)+N^{\frac{\theta}{\theta -1}}\eta^N(t)\right].\\&
 \label{F3}
\end{split}
\end{equation}
At this point, going back to (\ref{iter_}), taking the supremum over the set $\{(x,v)\in D_0\times b(N)\},$ by (\ref{f1}), (\ref{F_2}) and (\ref{F3}) we arrive at:
\begin{equation}
\begin{split}
\delta^N(t) \leq \,&C\left(N^{3\gamma+2}+N^{\frac{2\theta}{\theta -1}}\right) \int_0^t dt_1 \int_0^{t_1} dt_2\,\delta^N(t_2)+\\ &C \int_0^t dt_1 \int_0^{t_1} dt_2\,e ^{-\frac{\lambda}{2} N^2}+
CN^{\frac{\theta}{\theta -1}}\int_0^t dt_1 \int_0^{t_1} dt_2 \ \eta^N(t_2),\label{d}
\end{split}
\end{equation}
where in (\ref{f1}) we have taken into account  the bound (\ref{V^N}), which gives $|\delta^N(t)|\leq CN.$
On the other side, by using the same method to estimate the quantity $\eta^N(t),$ we get, analogously
\begin{equation}
\begin{split}
\eta^N(t) \leq \,&C\left(N^{3\gamma+2}+N^{\frac{2\theta}{\theta -1}}\right) \int_0^t dt_1 \,\delta^N(t_1)+\\ &C \int_0^t dt_1 \,e ^{-\frac{\lambda}{2} N^2}+
CN^{\frac{\theta}{\theta -1}}\int_0^t dt_1  \ \eta^N(t_1).
\end{split}
\end{equation}
Since obviously $\delta^N(t_1)\leq \int_0^{t_1}dt_2\,\eta^N(t_2)$ we get from the last eqn.
\begin{equation}
\begin{split}
&\eta^N(t) \leq C\left(N^{3\gamma+2}+N^{\frac{2\theta}{\theta -1}}\right) \int_0^t dt_1 \int_0^{t_1} dt_2\, \eta^N(t_2)+C \int_0^t dt_1\,e ^{-\frac{\lambda}{2} N^2}\\&+CN^{\frac{\theta}{\theta -1}}\int_0^t dt_1  \ \eta^N(t_1).\label{e}
\end{split}
\end{equation}

Putting now 
$$
\sigma^N(t)=\delta^N(t)+\eta^N(t)
$$
we have, summing up (\ref{d}) and (\ref{e}):

\begin{equation}
\begin{split}
\sigma^N(t)\leq \,&C\left(N^{3\gamma+2}+N^{\frac{2\theta}{\theta -1}}\right) \int_0^t dt_1 \int_0^{t_1} dt_2\,\sigma^N(t_2)+\\&CN^{\frac{\theta}{\theta -1}}\int_0^t dt_1  \ \sigma^N(t_1)+C \left(t+\frac{t^2}{2}\right)e ^{-\frac{\lambda}{2} N^2}.
\end{split}\label{d+e}
\end{equation}
Putting
\begin{equation}
\nu =\max \left\{\frac{3\gamma+2}{2},\ \frac{\theta}{\theta -1}\right\}\label{nu}
\end{equation}
we get
\begin{equation}
\begin{split}
\sigma^N(t)\leq \,C&\Bigg[N^{2\nu} \int_0^t dt_1 \int_0^{t_1} dt_2\,\sigma^N(t_2)+N^{\nu}\int_0^t dt_1  \ \sigma^N(t_1)+\\& \left(t+\frac{t^2}{2}\right)e ^{-\frac{\lambda}{2} N^2}\Bigg].
\end{split}\label{d'}
\end{equation}

We insert in the integrals the same inequality  for $\sigma^N(t_1)$ and $\sigma^N(t_2)$ and iterate in time, up to $k$ iterations. By direct inspection, using in the last step the estimate $\sup_{t\in [0,T]}\sigma^N(t)\leq CN,$ we arrive at
\begin{equation}
\begin{split}
\sigma^N(t)\leq &\,CNe ^{-\frac{\lambda}{2} N^2}\sum_{i=1}^{k-1}\sum_{j=0}^{i}\binom{i}{j}\frac{N^{2\nu j}t^{2j}N^{\nu (i-j)}t^{i-j}}{(2j+i-j)!} +\\&CN \sum_{j=0}^{k}\binom{k}{j}\frac{N^{2\nu j}t^{2j}N^{\nu (k-j)}t^{k-j}}{(2j+k-j)!}.\end{split}
\end{equation}
 We observe that the $i$-th iterate consists in a sum of integrals of increasing order $j\in [i, 2i]$ and that the binomial coefficients represent all the possible ways to arrange the integrals of the same order. 
By putting
\begin{equation*}
S_k''=\sum_{i=1}^{k-1}\sum_{j=0}^{i}\binom{i}{j}\frac{N^{\nu (i+j)}t^{i+j}}{(i+j)!}
\end{equation*}
and 
\begin{equation*}
S_k'=\sum_{j=0}^{k}\binom{k}{j}\frac{N^{\nu (j+k)}t^{j+k}}{(j+k)!}
\end{equation*}
we get
\begin{equation}
\sigma^N(t)\leq \,CNe ^{-\frac{\lambda}{2} N^2}S''_k+CNS_k'.\label{summ}
\end{equation}
 
 We start by estimating $S''_k.$ Recalling that $\binom{i}{j}<2^i$ we get
\begin{equation}
\begin{split}
S''_k\leq &\,\sum_{i=1}^{k-1}2^i\sum_{j=0}^{i}\frac{N^{\nu (i+j)}t^{i+j}}{(i+j)!}.  
\end{split}
\end{equation}
The use of the Stirling formula $a^nn^n\leq n!\leq b^nn^n$ for some constants $a,b>0$ yields:
\begin{equation}
S''_k\leq \,\sum_{i=1}^{k-1}2^i\sum_{j=0}^{i}\frac{N^{\nu(i+j)}(Ct)^{i+j}}{(i+j)^{i+j}}\leq \sum_{i=1}^{k-1}2^i\frac{N^{\nu i}(Ct)^{i}}{i^{i}}
\sum_{j=0}^{i}\frac{N^{\nu j}(Ct)^{j}}{j^{j}},
\end{equation}
from which it follows, again by the Stirling formula,
\begin{equation}
S''_k\leq \left(e^{N^{\nu}Ct}\right)^2\leq e^{N^{\nu}C}.\label{sum'}
\end{equation}

For the term $S_k',$ putting $j+k=\ell,$ we get
\begin{equation}
S_k'\leq 2^k\sum_{\ell=k}^{2k}\frac{N^{\ell\nu}(Ct)^{\ell}}{\ell^\ell}\leq  C^kk\frac{N^{2k\nu}}{k^k}.
\end{equation}
The hypothesis (\ref{def_U}) on the external field $B$ and  the range of the parameter $\gamma$ given by Proposition \ref{field} guarantee that $\nu <2,$ so that, choosing $k=N^\zeta$ with $\zeta>  4,$ we have, for sufficiently large $N,$
\begin{equation}
 S_k'\leq C^kk\left(k^{\frac{2\nu}{\zeta}-1}\right)^k\leq C^{-N^\zeta}. \label{ffinal}
 \end{equation}

Going back to (\ref{summ}), by (\ref{sum'}) and (\ref{ffinal}) we have seen that
\begin{equation}
\sigma^N(t)\leq CN\left[e^{-\frac{\lambda}{2} N^2}e^{N^{\nu}C}+C^{-N^
\zeta}\right].
\end{equation}
Hence, being $\nu <2,$ we can conclude that there exists a positive number $c$ such that
\begin{equation}
\sigma^N(t)\leq C^{-N^c}. \label{fff} \end{equation}
\subsection{Conclusion of the proof}We have shown that the sum $\sum \sigma^N(t)$  converges  uniformly in $t\in [0,T]$,
 and then the sequences $X^N(t)$ and $V^N(t)$ are Cauchy sequences, uniformly on  $[0,T].$ Hence, for any fixed $(x,v)$ they converge to limit functions which we call $X(t)$ and $ V(t).$ Now it remains to prove
\medskip\\
\noindent 
(i) properties (\ref{dect}) and (\ref{asst}), \\
\noindent
(ii) that these functions satisfy eqn. (\ref{ch}),\\
 \noindent
(iii) that the plasma remains confined in the cylinder $D,$\\
\noindent
(iv) uniqueness of the solution.
\medskip\\
 To this aim, we start by giving a $N$-uniform estimate for $|V^N(t)-v|.$ Let us fix $(x,v)\in D_0\times \mathbb{R}^3,$ an integer $N_0=\hbox{intg}(|v|+C)$ with $C>1,$ and $N>N_0.$ Then by construction it is $(x,v)\in D_0\times b(N).$ By (\ref{fff}) and (\ref{V^N}) it is
\begin{equation}
\begin{split}
|V^N(t)-v|\leq\,& |V^{N_0}(t)-v|+\sum_{k=N_0+1}^N|V^k(t)-V^{k-1}(t)|\leq\\& |V^{N_0}(t)-v|+C\leq |v| +CN_0
\end{split}
\end{equation}
and, by the choice of $N_0,$
\begin{equation}
|V^N(t)-v|\leq C(|v|+1). \label{N_0}
\end{equation}
From this it follows
\begin{equation}
|X^N(t)-x|\leq C(|v|+1).\label{N_1}
\end{equation}
We use these estimates to prove properties (\ref{dect}) and (\ref{asst}) for the density. To prove the first one, we observe that the bound (\ref{N_0}) implies
\begin{equation}\begin{split}
f(X(t), V(t),t)=&f_0(x,v)\leq C_0e^{-\lambda |v|^2}\leq Ce^{-\bar{\lambda}|V(t)|^2}.\label{G2}
\end{split}\end{equation}
From this it follows
\begin{equation}
\sup_{t\in [0,T] } \Vert \rho(t) \Vert_{L^\infty} \leq C.
\label{norma_rho}
\end{equation}
To prove the decay of the spatial density we partition the velocity space by the sets $S_1$ and $S_{1}^c,$ with 
$$
S_1=\left\{v: |v|\geq a_i= \sqrt{\frac{2\alpha\log |i|}{\bar{\lambda}}}\right\}.
$$ 
Hence, for any $i\in{\mathbb{Z}}/{\{0\}},$
\begin{equation}
\begin{split}
\int_{|x_1-i|\leq1}\rho(x,t)\ dx=&\int_{|x_1-i|\leq1} dxdv f(x,v,t)=\\&\int_{|x_1-i|\leq1}dx\left[\int_{S_1}f(x,v,t)dv+\int_{S_{1}^c}f(x,v,t)dv\right].
\end{split}
\end{equation}

By (\ref{G2}) we get  
 \begin{equation}
\begin{split}
\int_{i\leq x_1\leq i+1}dx&\int_{S_1}dvf(x,v,t)\leq C\int_{i\leq x_1\leq i+1}dx\int_{S_1}e^{-\bar{\lambda} |v|^2}dv\leq\\ &Ce^{-\frac{\bar{\lambda}}{2} |a_i|^2}\int_{i\leq x_1\leq i+1}dx\int_{S_1}e^{-\frac{\bar{\lambda}}{2} |v|^2}dv\leq C\frac{1}{|i|^\alpha} .\end{split}\label{g1}
\end{equation}
On the other side, by a change of variables and (\ref{N_1}) we have
\begin{equation}
\begin{split}
&\int_{i\leq x_1\leq i+1}dx\int_{S_{1}^c}dv \, f(x,v,t)\leq\int_{i-Ca_i\leq x_1\leq i+ Ca_i}\rho_0(x)dx\\&
\leq \sum_{|k|\leq Ca_i}\int_{| i+k-x_1|\leq 1}\rho_0(x)dx\leq \sum_{|k|\leq Ca_i}\frac{C_1}{|i+k|^\alpha}.
\end{split}\end{equation}
Since $|k|\leq Ca_i,$ for large $|i|$ it is $|i+k|\geq \frac{|i|}{2}.$ Hence previous formula gives
\begin{equation}
\int_{i\leq x_1\leq i+1}dx\int_{S_{1^c}}dvf(x,v,t)\leq \frac{C\log|i|}{|i|^\alpha}
\end{equation}
 which, together with (\ref{g1}), implies (\ref{asst}).

By the estimate (\ref{dect}) we obtain that  the field $E^N$ is uniformly bounded in $N.$ Indeed:
\begin{equation}
|E^N(x,t)|\leq C\int_{(x,v)\in D \times {\mathbb{R}}^3}  \frac{e^{-\bar\lambda |v|^2}}{|x-y|^2} \, dydv  \leq C. \label{bd}
\end{equation}
Moreover, it can be seen that $E^N(x,t)\to E(x,t)$  uniformly on $[0, T]$.
In fact,  the term $|E^N(x,t)-E(x,t)|$ can be estimated  in the same way as we did with ${\mathcal{F}}_2$ in the proof of
the convergence (subsection \ref{conv_pd}), by using the bound  (\ref{norma_rho}) on the density, yielding
\begin{equation}
|E^N(x,t)-E(x,t)| \leq C |X^N(t)-X(t)| + C e^{-\frac{\lambda}{2} N^2},
\end{equation}
which proves the convergence of $E^N$ to $E$.

Putting estimates (\ref{N_0}) and (\ref{bd}) in  (\ref{eque}), we see that the external magnetic field $B$ can be bounded as
\begin{equation}
|B(X^N(t)|\leq\,C(|v|+1)^{\frac{\theta}{\theta -1}},
\label{Bb}
\end{equation}
which proves the confinement.
Hence, we have proved that the limit functions $(X(t), V(t))$ satisfy the integral version of the characteristics equation (\ref{ch}) over the time interval $[0,T].$

 Finally, the uniqueness of the solution can be inferred by the convergence of the partial dynamics, putting the difference of two solutions in place of the difference of two dynamics. This completes the proof of Theorem 1.

\section{The local energy}

Since this system has unbounded energy thus, in order to have a control on the spatial density, and then proceed in proving the estimate (\ref{field_eq}) on the electric field $E,$ we introduce the following $local\ energy$. For $\mu\in \mathbb{R}$ and $R>0$ we define the mollified function,
\begin{equation}
\varphi^{\mu,R}( x)=\varphi\Bigg(\frac{| x_1-\mu|}{R}\Bigg), \label{a}
\end{equation}
where $\varphi,$ assumed to be smooth for technical purposes, is defined as:
\begin{equation}
\varphi(a)=1 \ \ \hbox{if} \ \ a\in[0,1] 
\end{equation}
\begin{equation}
\varphi(a)=0 \ \ \hbox{if} \ \ a\in[2,+\infty) \label{c}
\end{equation}
\begin{equation}
-2\leq \varphi'(a)\leq 0.
\end{equation}
The local energy is the following function:
\begin{equation}
\begin{split}
 &W^N( \mu,R,t)=\frac12 \int d x\ \varphi^{\mu,R}( x)\int d v \  |v|^2 f^N( x, v,t)+\\
 &\frac12\int d x \ \varphi^{\mu,R}( x)\rho^N( x,t)\int dy\ 
 \frac{ \rho^N ( y,t)}{ | x- y|}.
\end{split} 
\label{W}\end{equation}
The function  $W^N$ is a kind of smoothed energy of a bounded region, in which the interaction with the rest of the system has been taken into account. Note that it does not contain the effects of the magnetic force, as it does not contribute to energy variations.

We put
\begin{equation}
Q^N(R,t)=\max \left\{1, \sup_{\mu\in {\mathbb {R}} }W^N( \mu,R,t)\right\}.
\end{equation}
 The assumptions (\ref{G}) and (\ref{ass}) on $f_0^N$ imply that $Q^N$ is finite at time $t=0$ and has the following bound: \begin{proposition}
\begin{equation*}
Q^N(R,0)\leq  CR^{1- \alpha}, \quad \quad \alpha>5/9.
\end{equation*}
\label{prop}
\end{proposition}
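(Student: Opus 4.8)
The plan is to estimate separately the two pieces of $W^N(\mu,R,0)$ — the kinetic part and the potential (self-interaction) part — uniformly in $\mu$, using only the two hypotheses \eqref{G} and \eqref{ass} on $f_0^N$, together with the fact that $f_0^N \le f_0$ pointwise (so every bound for $f_0$ transfers to $f_0^N$). For the kinetic term, I would integrate out the velocity first: by \eqref{G}, $\int dv\,|v|^2 f_0^N(x,v) \le C_0 \int dv\,|v|^2 e^{-\lambda|v|^2} = C$, a constant, so the kinetic term is bounded by $C\int dx\,\varphi^{\mu,R}(x)$. Since $\varphi^{\mu,R}$ is supported on the slab $\{|x_1-\mu|\le 2R\}$ and $f_0$ (hence the relevant domain) is supported on $D_0$, whose cross-section has area $\pi\bar A^2 = O(1)$, this spatial integral is $O(R)$. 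The decay hypothesis \eqref{ass} is not even needed for the crude bound here, but to get the sharp exponent $R^{1-\alpha}$ one writes $\int dx\,\varphi^{\mu,R}(x)\rho_0(x) \le \sum_{i} C_1 |i|^{-\alpha}$ over the $O(R)$ integer slabs meeting the support of $\varphi^{\mu,R}$; bounding this sum by $C R^{1-\alpha}$ (for $\alpha \le 1$; for $\alpha > 1$ it is $O(1) \le CR^{1-\alpha}$ trivially for $R\ge 1$) gives the kinetic part.

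The potential term is the substantive one. Here I would write it as $\tfrac12 \int dx\,\varphi^{\mu,R}(x)\,\rho_0(x)\,U_0(x)$ where $U_0(x) = \int dy\,\rho_0(y)/|x-y|$ is the (infinite-charge) potential generated by the whole plasma, and the key is a pointwise bound on $U_0$ on $D_0$. Split the $y$-integral into the near region $|x-y|\le 1$ and the far region $|x-y|>1$. On the near region, $\rho_0 \in L^\infty$ — a consequence of \eqref{G}, as noted in Remark 2 — so $\int_{|x-y|\le 1}\rho_0(y)/|x-y|\,dy \le \|\rho_0\|_\infty \int_{|x-y|\le 1}|x-y|^{-1}dy = C$. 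On the far region, again the geometry of $D$ makes the system effectively one-dimensional: slicing into unit slabs indexed by $j\in\mathbb Z$ and using that within slab $j$ the distance from $x$ is comparable to $|x_1 - j|$ (when $|x_1-j|\ge 1$), $\int_{|x-y|>1}\rho_0(y)/|x-y|\,dy \le C\sum_{j} |j - x_1|^{-1}\int_{j\le y_1\le j+1}\rho_0(y)\,dy \le C\sum_j |j-x_1|^{-1}|j|^{-\alpha}$, which converges (here $\alpha > 5/9 > 0$ suffices for convergence of the tail, and the worst term, near $j \approx x_1$, is $O(1)$) to a bound $U_0(x)\le C(1+|x_1|)^{-\alpha}\log(\cdots)$ or, more crudely, $U_0(x)\le C$. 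Then $\tfrac12\int dx\,\varphi^{\mu,R}(x)\rho_0(x)U_0(x) \le C\int dx\,\varphi^{\mu,R}(x)\rho_0(x) \le CR^{1-\alpha}$ by exactly the slab-sum estimate used for the kinetic term.

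Taking the maximum over $\mu$ and the max with $1$ in the definition of $Q^N$ then yields $Q^N(R,0)\le CR^{1-\alpha}$ with a constant independent of $N$ (since all bounds were for $f_0$, not $f_0^N$), as claimed. The main obstacle is the pointwise control of the infinite-charge potential $U_0$: one must exploit the one-dimensional character of $D$ at large distances (so that the $1/|x-y|$ kernel, integrated against a slowly-decaying-but-bounded density over a thin cylinder, gives a convergent sum rather than a divergent $\int r^{-1}\,dr$ over all of $\mathbb R^3$), and check that the exponent threshold $\alpha > 5/9$ — which actually originates in Proposition \ref{field} rather than here — is consistent with convergence; any $\alpha>0$ makes the tail sum converge, so the delicate exponent bookkeeping is deferred to the later sections. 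This proof, as the text remarks, is identical to the one in \cite{Rem}.
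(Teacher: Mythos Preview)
Your approach matches the paper's: both first establish the slab-mass bound $\int\varphi^{\mu,R}(x)\rho_0(x)\,dx\leq CR^{1-\alpha}$ by summing assumption~\eqref{ass} over the $O(R)$ unit slabs meeting $\{|x_1-\mu|\leq 2R\}$ (the paper does this by treating the cases $|\mu|\leq 2R$ and $|\mu|>2R$ separately), and both handle the potential term by proving the pointwise bound $U_0(x)=\int\rho_0(y)|x-y|^{-1}\,dy\leq C$ via exactly your near/far split, the far part reducing to the one-dimensional tail sum $\sum_{|i|\geq 1}|i|^{-1}|x_1+i|^{-\alpha}<\infty$.

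One step needs tightening. For the kinetic term you correctly get from~\eqref{G} that $\int dv\,|v|^2 f_0^N(x,v)\leq C$, hence kinetic term $\leq C\int_{D_0}\varphi^{\mu,R}\,dx=O(R)$; you then say that the sharp $R^{1-\alpha}$ ``follows'' from bounding $\int\varphi^{\mu,R}\rho_0$. But these are different quantities: hypothesis~\eqref{G} by itself does \emph{not} give $\int|v|^2 f_0(x,v)\,dv\leq C\rho_0(x)$, since mass could sit at arbitrarily large $|v|$. A clean fix is to split at $|v|=M$: the low-velocity part contributes at most $M^2\int\varphi^{\mu,R}\rho_0\leq CM^2 R^{1-\alpha}$, while the high-velocity part contributes at most $CR\int_{|v|>M}|v|^2 e^{-\lambda|v|^2}\,dv\leq CR\,e^{-\lambda M^2/2}$; optimizing in $M$ yields $C(\log R)\,R^{1-\alpha}$, which is harmless for every downstream use (Corollary~\ref{cor} and the exponent bookkeeping in Section~5). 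The paper's own proof is equally elliptical at this step --- the line ``Hence, by~\eqref{G}, we have $W^N(\mu,R,0)\leq CR^{1-\alpha}\int\ldots$'' simply bypasses the kinetic term --- so you are not diverging from it, but the passage as you wrote it reads like a non sequitur.
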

\begin{proof}
The proof is quite similar to that in \cite{Rem}.
We consider $R$ integer for simplicity. It is easily seen that
\begin{equation}
\int d x\  \varphi^{\mu,R}( x)\rho(x,0)\leq C  R^{1-\alpha}. \label{e1}
\end{equation}
Indeed, it is:
\begin{equation*}
\begin{split}
\int_{|x_1-\mu|\leq R}\rho^N(x,0)\ dx=  \int_{|x_1-\mu|\leq 1}\rho^N(x,0)\ dx+  \sum_{|i|=1}^{R-1}\int_{|\mu+i- x_1|\leq 1} \rho^N(x,0)\ dx.
\end{split} \end{equation*}
Now, if $|\mu|\leq 2R,$ by (\ref{ass}) it is:
\begin{equation*}
\begin{split}
&\int_{|x_1-\mu|\leq R}\rho^N(x,0)\ dx\leq C+\int_{|x_1|\leq 3R}\rho^N(x,0)\ dx\leq \\&C+ \sum_{|i|=1}^{3R-1}\int_{|i- x_1|\leq 1} \rho^N(x,0)\ dx\leq C \sum_{|i|=1}^{3R-1}\frac{1}{|i|^{\alpha}}\leq C  R^{1-\alpha}\end{split}
\end{equation*}
while, if $|\mu|> 2R$:
\begin{equation*}
\begin{split}
&\int_{|x_1-\mu|\leq R}\rho^N(x,0)\ dx \leq C+\sum_{|i|=1}^{R-1}\frac{1}{|\mu+i|^{\alpha}}\leq C  R^{1-\alpha}
\end{split} \label{W11}
\end{equation*}
since $|\mu +i|\geq R.$ 
Hence, by (\ref{G}), we have
$$
W^N(\mu, R,0)\leq C  R^{1-\alpha}\int dy\ 
 \frac{ \rho^N ( y,0)}{ | x- y|}.
$$

Now it is:
\begin{equation}
\begin{split}
\int \frac{\rho^N(y,0)}{|x-y|}\ dy\leq & \int_{|x_1-y_1|\leq 1} \frac{\rho^N(y,0)}{|x-y|}\ dy+\sum_{|i|\geq 1}\frac{1}{|i|}\int_{x_1+i\leq y_1\leq x_1+i+1}\rho^N(y,0)\ dy \\& \leq C\left(1+\sum_{|i|\geq 1:|x_1+i|\neq 0}\frac{1}{|i| \, |x_1+i|^\alpha}\right).
\end{split}\label{e3}
\end{equation}
By considering the two sums
\begin{equation*}
\sum'=\sum_{i:|i|\geq |x_1+i|} \qquad\hbox{and} \qquad \sum''=\sum_{i:|i|\leq |x_1+i|}
\end{equation*}
we get
$$
\sum_{|i|\geq 1:|x_1+i|\neq 0}\frac{1}{|i| \, |x_1+i|^\alpha}\leq \sum'\frac{1}{|i|^{1+\alpha}}+\sum''\frac{1}{|x_1+i|^{1+\alpha}}\leq C.
$$
 This, together with (\ref{e1}), proves the proposition.\end{proof}

\medskip

We define the  maximal displacement of a plasma particle as 
\begin{equation}
R^N(t)=1+\int_0^t{\cal V}^N(s)\, ds \label{g}
\end{equation}
and put
\begin{equation}
Q^N(t) = \sup_{s\in[0, t]} Q^N(R(s), s).
\end{equation}

 We state the most important result on the local energy, whose proof is given in Section 6:
 \begin{proposition}
There exists a constant $C$ independent of $N$ such that
$$
Q^N(R^N(t),t)\leq CQ^N(R^N(t),0).
$$
\label{www}
\end{proposition}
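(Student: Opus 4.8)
The plan is to differentiate $W^N(\mu,R,t)$ in time along the flow and to obtain a Gronwall-type inequality for $Q^N(R^N(t),t)$, following the scheme of \cite{Rem} (as the authors announce). First I would compute $\partial_t W^N$, using the Vlasov equation (\ref{chN}) and the continuity equation for $\rho^N$. The kinetic part produces a term $\frac12\int dx\,(v\cdot\nabla_x\varphi^{\mu,R})\int dv\,|v|^2 f^N$ plus the work done by the field, $\int dx\,\varphi^{\mu,R}\,\rho^N\,E^N\cdot(\text{flux})$; the potential part contributes the complementary field term plus a term with $\nabla_x\varphi^{\mu,R}$ acting on the $1/|x-y|$ kernel. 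The crucial cancellation is that the magnetic force $v\wedge B$ does not appear, since it is orthogonal to $v$ and hence does no work — this is exactly the remark following (\ref{W}), and it is why $W^N$ is the right object even in the confined setting. What survives after the cancellation of the "bulk" field terms is controlled by the gradient $\nabla_x\varphi^{\mu,R}$, which is supported on the shell $R\leq|x_1-\mu|\leq 2R$ and is bounded by $C/R$; this localizes all the dangerous terms to a bounded-in-$x_1$ region and gives, schematically, $\frac{d}{dt}W^N(\mu,R,t)\leq \frac{C}{R}\,\mathcal{V}^N(t)\,\big(\text{local mass and energy densities}\big)$.

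Next I would bound the right-hand side by $Q^N$ itself. The spatial density restricted to a unit slab is controlled by the local energy via the standard interpolation/bootstrap estimate (the one that lets a bound on $W^N$ imply a bound on $\rho^N$), so that the shell integrals appearing in $\frac{d}{dt}W^N$ are dominated by powers of $Q^N(R(t),t)$ times $\mathcal{V}^N(t)/R$. Taking the supremum over $\mu$ and over $s\in[0,t]$, and recalling the definition $R^N(t)=1+\int_0^t\mathcal{V}^N(s)\,ds$ so that $\dot R^N(t)=\mathcal{V}^N(t)$, the factor $\mathcal{V}^N(t)/R^N(t)$ becomes $\dot R^N(t)/R^N(t)=\frac{d}{dt}\log R^N(t)$. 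This is the key structural point: the growth rate of the energy in a region of size $R^N(t)$ is logarithmic in $R^N(t)$, which after integration and another application of the density-from-energy estimate closes into
\[
Q^N(R^N(t),t)\leq Q^N(R^N(t),0)+C\int_0^t \big(\text{power of }Q^N(R^N(s),s)\big)\,\frac{\mathcal{V}^N(s)}{R^N(s)}\,ds .
\]
One must be careful that enlarging $R$ only increases $Q^N(R,0)$ (monotonicity of $R\mapsto Q^N(R,0)$, clear from Proposition \ref{prop}), so $Q^N(R^N(s),0)\leq Q^N(R^N(t),0)$ for $s\leq t$ and the inhomogeneous term can be frozen at the final time.

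Finally I would run Gronwall. Writing $q(t)=Q^N(R^N(t),t)$, the inequality has the form $q(t)\leq q_0 + C\int_0^t q(s)^p\,d\!\log R^N(s)$ with $q_0=Q^N(R^N(t),0)$; since $R^N$ is increasing, $d\log R^N$ is a finite positive measure on $[0,T]$ whose total mass is $\log R^N(T)$, and one obtains a bound $q(t)\leq C q_0$ provided the accumulated logarithmic weight is absorbed — this is where the precise exponents (and the role of $\alpha>5/9$, which keeps $Q^N(R,0)\leq CR^{1-\alpha}$ with $1-\alpha$ small enough) enter to guarantee the constant is independent of $N$. The main obstacle, and the step deserving the most care, is the differentiation of the doubly-nonlocal potential term in (\ref{W}) and showing that, after symmetrizing in $x\leftrightarrow y$, every term not cancelled by the field work is genuinely localized by $\nabla\varphi^{\mu,R}$ and estimable by the local mass alone; the treatment of the near-diagonal part of the $1/|x-y|$ kernel (using the $L^\infty$ bound on $\rho^N$ from (\ref{dect}) to handle the integrable singularity) is the delicate piece. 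Since the authors state this proof "does not differ from the one given in \cite{Rem}", I would otherwise mostly transcribe that argument, checking only that the extra $B$-dependent characteristics do not spoil any of the bounds — which they do not, precisely because $B$ drops out of the energy identity.
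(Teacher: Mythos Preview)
Your overall architecture is right --- differentiate $W^N$ in time, observe that the magnetic force does no work, symmetrize the potential term, and localize what remains via $\nabla\varphi^{\mu,R}$ --- but the closing Gronwall step, as you describe it, does not yield the stated conclusion, and the role you assign to $\alpha>5/9$ is a misconception.

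Concretely: you track $q(t)=Q^N(R^N(t),t)$ and arrive at an inequality with integrating measure $d\log R^N(s)=\dfrac{\mathcal V^N(s)}{R^N(s)}\,ds$, whose total mass on $[0,T]$ is $\log R^N(T)$. Gronwall then gives only $q(t)\le q_0\,R^N(T)^C$, not $q(t)\le C\,q_0$. Since $R^N(T)$ is not a priori bounded in $N$ --- indeed, bounding it is the whole purpose of the field estimate that \emph{uses} this proposition --- the argument does not close. And $\alpha$ plays no role whatsoever in Proposition~\ref{www}: it enters only in Proposition~\ref{prop} and the downstream field bounds, so it cannot rescue the constant here.

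The paper's device is to freeze the target time $t$ and study $s\mapsto W^N(\mu,R(t,s),s)$ with the \emph{shrinking} radius
\[
R(t,s)=R^N(t)+\int_s^t\mathcal V^N(\tau)\,d\tau,\qquad \partial_s R(t,s)=-\mathcal V^N(s).
\]
The transport term through the shell then combines with the $\partial_R$-contribution into a single quantity $A_2(t,s)\le 0$, which is discarded. The surviving term after the $x\leftrightarrow y$ symmetrization is bounded by $\dfrac{C\,\mathcal V^N(s)}{R(t,s)}\,Q^N(R(t,s),s)$, and now the Gronwall weight is
\[
\int_0^t \frac{\mathcal V^N(s)}{R(t,s)}\,ds
= -\int_0^t \frac{\partial_s R(t,s)}{R(t,s)}\,ds
= \log\frac{R(t,0)}{R(t,t)} \le \log 2,
\]
a universal constant independent of $N$, $t$, and $\alpha$. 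Setting $s=t$ and using Lemma~\ref{lem2} to pass from $Q^N(R(t,0),0)$ to $Q^N(R^N(t),0)$ finishes the proof. (An equivalent route is to run Gronwall at a \emph{fixed} radius $R$ and substitute $R=R^N(t)$ only at the very end, so the exponent becomes $\frac{C}{R}\int_0^t\mathcal V^N\le C$; but this is not what your write-up says.)

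Two smaller points. After symmetrizing you do not need any $L^\infty$ bound on $\rho^N$ for the near-diagonal part of $1/|x-y|$: the identity $r\,|\nabla r^{-1}|=r^{-1}$ leaves exactly $\rho^N(x)\rho^N(y)/|x-y|$, which is the potential part of $W^N$ and is absorbed directly. And the Gronwall inequality is \emph{linear} in $Q^N$; there is no power $q(s)^p$.
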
 
As consequence of Propositions \ref{www} and \ref{prop} we have:
\begin{corollary}
\begin{equation}
Q^N(R^N(t),t)\leq CR^N(t)^{1-\alpha}.
\end{equation}
\label{cor}
\end{corollary}

We give now a first estimate on the electric field $E,$ in terms of the local energy. We will make use of it in the proof of Proposition \ref{field} in the next section.
\begin{proposition}
\begin{equation}
|E^N(x,t)|\leq C_7{\mathcal{V}}^N(t)^{\frac43}Q^N(R^N(t),t)^{\frac13}.
\label{C1}
\end{equation}
\label{prop2}\end{proposition}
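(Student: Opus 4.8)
The goal is to bound the Coulomb field $E^N(x,t)$ pointwise by splitting the integral $\int \frac{\rho^N(y,t)}{|x-y|^2}\,dy$ into a near region $|x-y|<\ell$ and a far region $|x-y|\ge\ell$, optimizing over the cutoff radius $\ell>0$ at the end. In the near region I would use the $L^\infty$ bound on $\rho^N$: combining (\ref{rho_t}) with the elementary estimate $\int_{|x-y|<\ell}|x-y|^{-2}\,dy = C\ell$ gives a contribution of order $\|\rho^N(t)\|_\infty\,\ell$. In the far region the singularity is harmless, and one must instead exploit that the plasma has controlled energy, i.e. bound the far contribution by something involving $Q^N(R^N(t),t)$.

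For the far region the natural tool is the bound that ties $\int\frac{\rho^N(y,t)}{|x-y|^2}\,dy$ restricted to $|x-y|\ge\ell$ to the potential energy part of the local energy. I would argue as follows: on $\{|x-y|\ge\ell\}$ we have $|x-y|^{-2}\le \ell^{-1}|x-y|^{-1}$, so the far contribution is at most $\ell^{-1}\int\frac{\rho^N(y,t)}{|x-y|}\,dy$, and this last integral — the potential at $x$ — is controlled by the local energy centered near $x$ over a window of size $R^N(t)$ (the maximal displacement), using (as in the computation (\ref{e3}) of Proposition \ref{prop} and the standard Cauchy–Schwarz/Hölder splitting of $\int \rho/|x-y|$ against $\rho^{1/2}(\rho/|x-y|)^{1/2}$) the bound $\int\frac{\rho^N(y,t)}{|x-y|}\,dy \le C\,Q^N(R^N(t),t)^{1/2}\,\|\rho^N(t)\|_\infty^{1/2}$ or a similar interpolation. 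Plugging $\|\rho^N(t)\|_\infty \le C\,\mathcal V^N(t)^{3\gamma}$ from (\ref{rho_t}) makes both pieces explicit in $\mathcal V^N(t)$, $Q^N$ and $\ell$.

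So I expect to reach an estimate of the rough shape
\begin{equation*}
|E^N(x,t)| \le C\,\mathcal V^N(t)^{3\gamma}\,\ell + C\,\ell^{-1}\,Q^N(R^N(t),t)^{a}\,\mathcal V^N(t)^{b},
\end{equation*}
and then I would choose $\ell$ to balance the two terms, which produces the geometric-mean form $\mathcal V^N(t)^{4/3}Q^N(R^N(t),t)^{1/3}$ claimed in (\ref{C1}) — provided the exponents $a,b$ and the power $3\gamma$ conspire correctly, which is exactly the bookkeeping one has to get right. The precise split of $\int\rho/|x-y|$ into near/intermediate/far annuli (as in (\ref{e3})) is needed to see that the energy-controlled piece really is $Q^{1/3}$ after optimization rather than a different power; this is the step where the "one-dimensional at infinity" structure of the cylinder and the definition of $Q^N$ in terms of the windowed energy $W^N(\mu,R,t)$ enter.

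**Main obstacle.** The delicate point is the far-field term: one cannot simply use $\|\rho^N\|_\infty$ there (the integral $\int_{|x-y|\ge\ell}|x-y|^{-2}\,dy$ diverges, reflecting the infinite charge), so the whole estimate hinges on correctly extracting the factor $Q^N(R^N(t),t)^{1/3}$ from the potential energy via an interpolation between the $L^\infty$ bound on $\rho^N$ and the energy bound — i.e. writing $|x-y|^{-2}=|x-y|^{-2+\sigma}\cdot|x-y|^{-\sigma}$ for a suitable $\sigma$, using Hölder, and matching the resulting powers so that after optimizing in $\ell$ one lands precisely on the exponents $4/3$ and $1/3$. Getting that exponent arithmetic to close (and checking it is consistent with $\gamma<2/3$, so that the final bound is useful in Proposition \ref{field}) is where the real work is; the near-field term and the reduction $|x-y|^{-2}\le\ell^{-1}|x-y|^{-1}$ are routine.
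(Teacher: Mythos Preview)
Your overall plan (near/far split, $L^\infty$ on the near piece, energy on the far piece, optimize the cutoff) is the right shape, but two concrete gaps prevent it from closing.

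\textbf{Circularity in the near-field bound.} You invoke $\|\rho^N(t)\|_\infty\le C\,\mathcal V^N(t)^{3\gamma}$ from (\ref{rho_t}). That estimate is part of Corollary~\ref{coro}, which is deduced from Proposition~\ref{field}, whose proof in turn rests on the very estimate (\ref{C1}) you are trying to establish. At this stage only the \emph{trivial} bound $\rho^N(x,t)=\int f^N(x,v,t)\,dv\le C_0\,|\{|v|\le\mathcal V^N(t)\}|\le C\,\mathcal V^N(t)^3$ is available (since $f^N\le C_0$ and the velocity support at time $t$ is contained in $\{|v|\le\mathcal V^N(t)\}$). The paper uses exactly this $\mathcal V^3$ bound for the innermost ball; the exponent $4/3$ in (\ref{C1}) is calibrated to it, not to $3\gamma$.

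\textbf{Wrong tool for the far field.} Your step $|x-y|^{-2}\le\ell^{-1}|x-y|^{-1}$ followed by bounding the potential $\int\rho^N/|x-y|\,dy$ via ``$Q^{1/2}\|\rho\|_\infty^{1/2}$ or a similar interpolation'' does not produce the required $Q^{1/3}$ after optimization, and the potential at a single point is not directly controlled by $W^N$ (the potential-energy part of $W^N$ contains an extra factor of $\rho$). The paper's mechanism is different and specific: from the \emph{kinetic} part of $W^N$ one first derives
\[
\int_{|x_1-\mu|\le R}\rho^N(y,t)^{5/3}\,dy\le C\,W^N(\mu,R,t),
\]
(split $\int f\,dv$ at $|v|=a$, optimize over $a$), and then applies H\"older with exponents $5/3$ and $5/2$ on an intermediate annulus $A_1<|x-y|\le R^N(t)$ to get a contribution $\le C\,Q^{3/5}A_1^{-4/5}$. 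Balancing this against $C\,\mathcal V^3 A_1$ at $A_1=C\,\mathcal V^{-5/3}Q^{1/3}$ is what yields exactly $\mathcal V^{4/3}Q^{1/3}$. The truly far annuli $|x-y|\ge R^N(t)$ are handled separately, using the cylinder geometry and the maximal-displacement definition of $R^N(t)$ to pull back to $\rho_0^N$, giving an $O(1)$ tail. Your two-region split conflates the intermediate and very-far regimes; without the $\rho^{5/3}$ estimate and the three-scale decomposition, the exponent bookkeeping you flag as ``the real work'' cannot be completed.
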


\begin{proof} 
We  premise an estimate on the spatial density: for any $\mu\in \mathbb{R}$ and any positive number $R$ it is:
\begin{equation}
\int_{|\mu -x_1|\leq R}dx \ \rho^N(x,t)^{\frac53}\leq CW^N(\mu,R,t).\label{lem1}
\end{equation}

In fact:
\begin{equation*}
\begin{split}
\rho^N(x,t)&\leq \int_{|v|\leq a} dv f^N(x,v,t)+\frac{1}{a^2}\int_{|v|>a}dv\ |v|^2 f^N(x,v,t)\leq \\&C a^3+\frac{1}{a^2}\int dv\ |v|^2 f^N(x,v,t).
\end{split}
\end{equation*}
By minimizing over $a,$  taking the power $\frac53$ of both members and integrating over the set $\{x: |\mu-x_1|\leq R\}$ we get (\ref{lem1}).

Now we choose a sequence of positive numbers $A_0,A_1, A_2,...A_k,...$ such that $A_0=0,$ $A_1<A$ has to be chosen suitably in the following and $A_k=(k-1)R^N(t)$ for $k=2,3,...$ Then we have
\begin{equation}
|E^N(x,t)|\leq \sum_{k=0}^{+\infty}{\mathcal{J}}_k (x,t)\label{E3}
\end{equation}
with
$$
{\mathcal{J}}_k(x,t)=\int_{A_{k}< |x-y|\leq A_{k+1}}   \chi(y\in D)\, \frac{ \rho^N(y,t)}{|x-y|^2}\, dy.
$$

We estimate the terms in (\ref{E3}). We have:

\begin{equation}
{\mathcal{J}}_0(x,t)\leq C\|\rho^N(t)\|_{L^{\infty}}A_1\leq C{\mathcal{V}}^N(t)^{3}A_1.
\end{equation}
Moreover by (\ref{lem1}) we get:
\begin{equation*}
\begin{split}
&{\mathcal{J}}_1(x,t)\leq\\& C\left(\int_{|x-y|\leq A_{2}} \, dy \,  \chi(y\in D)\,\rho^N(y,t)^{\frac53}\right)^{\frac35}\left(\int_{ A_{1}< |x-y|\leq A_{2}} \frac{\chi(y\in D)}{|x-y|^5}\, dy\right)^{\frac25}\leq \\& CW^N(x_1,R^N(t),t)^{\frac35}\left[ A_1^{-\frac45}+R^N(t)^{-\frac45}\right]\leq CQ^N(R^N(t),t)^{\frac35} A_1^{-\frac45}.
 \end{split}
\end{equation*}
The minimum value of ${\mathcal{J}}_0(x,t)+{\mathcal{J}}_1(x,t)$ is attained at 
$$
A_1=C {\mathcal{V}}^N(t)^{-\frac53}Q^N(R^N(t),t)^{\frac13}
$$
so that we get
\begin{equation}
{\mathcal{J}}_0(x,t)+{\mathcal{J}}_1(x,t)\leq  C {\mathcal{V}}^N(t)^{\frac43}Q^N(R^N(t),t)^{\frac13}. \label{I01}
\end{equation}
For the remaining terms, for any $k=2,3,...$ 
we observe that from the definition (\ref{g}) of the maximal displacement it follows that if $A_{k}< |x-y|\leq A_{k+1},$ then $A_{k-1}< |x-Y(t)|\leq A_{k+2}.$ Then   by a change of variables we get:  
\begin{equation}
\begin{split}
\left|{\mathcal{J}}_k(x,t)\right|\leq&\int_{A_{k}< |x-y|\leq A_{k+1}} \chi(y\in D)\,\frac{ f^N(y,w,t)}{|x-y|^2} dydw\leq \\&   \frac{1}{(k-1)^2R^N(t)^2}\int_{ A_{k-1}<|x-y|\leq A_{k+2}} \chi(y\in D)\, \rho_0^N(y)dy\leq \\& C\frac{1}{k^2R^N(t)}
\end{split}\end{equation}
since the volume of the set $\{y\in D:A_{k-1}<|x-y|\leq A_{k+2}\}$ is proportional to $R^N(t).$  Hence:
\begin{equation}
\sum_{k=2}^{+\infty}{\mathcal{J}}_k (x,t) \leq C.
\label{tail}
\end{equation} 
The proof is achieved by (\ref{E3}), (\ref{I01}) and (\ref{tail}).\end{proof}

\section{The estimate of $E^N:$ proof of Proposition \ref{field}}

The proof of Proposition \ref{field} follows the same lines of the analogous in \cite{Rem}. The difference consists in the fact that here we need a more refined estimate of $E^N$ in terms of the maximal velocity, since the exponent $\gamma$ has to be smaller than  $\frac23.$ To this aim we 
  need to control the time average of $E^N$ over a suitable time interval. Setting
\begin{equation*}
\langle E^N \rangle_{\bar{\Delta}}
:= \frac{1}{\bar{\Delta}} \int_{t}^{t+\bar{\Delta}} |E^N(X(s), s)| \, ds
\end{equation*}
we have the following result, which is the core of this Section:
\begin{proposition}
There exists a positive number $\bar{\Delta},$ depending on $N,$ such that:
\begin{equation}
\langle E^N \rangle_{\bar{\Delta}} \leq
C {\cal{V}}^N(T)^{\gamma}  \qquad \qquad  \gamma < \frac23.
\label{averna}
\end{equation}
for any $t\in [0,T]$ such that $t\leq T- \bar\Delta$.
\label{media}\end{proposition}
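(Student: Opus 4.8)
The plan is to bound the time average $\langle E^N\rangle_{\bar\Delta}$ by separating the short-range and long-range contributions of the charge to the field felt along the characteristic, controlling the long range pointwise through the local-energy estimate of Proposition~\ref{prop2}, and gaining the decisive improvement on the short range by passing to Lagrangian coordinates along the relative motion of two characteristics.

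Fix a cutoff length $\delta>0$, to be optimized at the end, and split, for $s\in[t,t+\bar\Delta]$,
\begin{equation*}
|E^N(X^N(s),s)|\leq\int_{|X^N(s)-y|\leq\delta}\frac{\rho^N(y,s)}{|X^N(s)-y|^{2}}\,dy+\int_{|X^N(s)-y|>\delta}\frac{\rho^N(y,s)}{|X^N(s)-y|^{2}}\,dy=:\mathcal{E}_<^N(s)+\mathcal{E}_>^N(s).
\end{equation*}
For $\mathcal{E}_>^N$ I would repeat the H\"older/local-energy computation from the proof of Proposition~\ref{prop2} (there, the terms $\mathcal{J}_1$ and $\sum_{k\geq2}\mathcal{J}_k$), now with the inner cutoff $A_1$ replaced by $\delta$, obtaining the pointwise bound $\mathcal{E}_>^N(s)\leq C\,Q^N(R^N(s),s)^{3/5}\,\delta^{-4/5}+C$; by Corollary~\ref{cor} and $R^N(s)\leq 1+T\,\mathcal{V}^N(T)\leq C\,\mathcal{V}^N(T)$ this is $\leq C\,\mathcal{V}^N(T)^{3(1-\alpha)/5}\,\delta^{-4/5}+C$, so after integration over $[t,t+\bar\Delta]$ and division by $\bar\Delta$ the long-range part contributes at most $C\,\mathcal{V}^N(T)^{3(1-\alpha)/5}\,\delta^{-4/5}+C$ to $\langle E^N\rangle_{\bar\Delta}$.

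For the short-range part the length $\bar\Delta$, and its dependence on $N$, enter decisively. I would write $\rho^N(y,s)=\int f^N(y,w,s)\,dw$, use the Liouville theorem to pass to the Lagrangian variables $(\tilde y,\tilde w)$ of the background particle (so that $f^N(y,w,s)=f_0^N(\tilde y,\tilde w)$ with $y=\tilde Y^N(s)$) and exchange the order of integration:
\begin{equation*}
\int_t^{t+\bar\Delta}\mathcal{E}_<^N(s)\,ds\leq\int f_0^N(\tilde y,\tilde w)\left[\int_t^{t+\bar\Delta}\frac{\chi(|\Delta(s)|\leq\delta)}{|\Delta(s)|^{2}}\,ds\right]d\tilde y\,d\tilde w,\qquad\Delta(s):=X^N(s)-\tilde Y^N(s).
\end{equation*}
Here the relative velocity is bounded, $|\dot\Delta(s)|=|V^N(s)-\tilde W^N(s)|\leq 2\,\mathcal{V}^N(T)$, while the relative acceleration is bounded by an $N$-dependent quantity (through the confinement estimate (\ref{new}) and Proposition~\ref{prop2}); choosing $\bar\Delta$ to be a correspondingly small, $N$-dependent length makes the relative motion over $[t,t+\bar\Delta]$ stay close to a straight line, and for a (nearly) straight relative line the inner time integral is controlled, of order $(|\dot\Delta|\,b)^{-1}$ with $b$ the impact parameter. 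Integrating this estimate over the background particles --- passing again through Liouville to $(\tilde Y^N(t),\tilde W^N(t))$, using $\|f^N\|_\infty\leq C_0$, the fact that the contributing particles fill a region of controlled volume in the cylinder $D$, and that the particles whose relative velocity is small occupy a correspondingly small set in $w$ --- produces a bound on $\frac1{\bar\Delta}\int_t^{t+\bar\Delta}\mathcal{E}_<^N(s)\,ds$ by a product of powers of $\mathcal{V}^N(T)$ and of $\delta$.

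Collecting the two contributions, $\langle E^N\rangle_{\bar\Delta}$ is bounded by the far-field term $C\,\mathcal{V}^N(T)^{3(1-\alpha)/5}\delta^{-4/5}$, the near-field terms just described, and a harmless additive constant; choosing $\delta$ to be a suitable negative power of $\mathcal{V}^N(T)$ to balance them yields $\langle E^N\rangle_{\bar\Delta}\leq C\,\mathcal{V}^N(T)^{\gamma}$, and the hypothesis $\alpha>5/9$ is precisely what guarantees that the resulting optimal $\gamma$ is strictly smaller than $2/3$. I expect the main difficulty to be this short-range estimate: making the near-collision time integral $\int_t^{t+\bar\Delta}\chi(|\Delta(s)|\leq\delta)\,|\Delta(s)|^{-2}\,ds$ rigorous forces the careful, $N$-dependent choice of $\bar\Delta$ that keeps $\Delta(s)$ genuinely close to a straight line, together with a delicate bookkeeping of the background particles having nearly vanishing relative velocity; this is exactly the refinement over the corresponding estimate in \cite{Rem} needed to push $\gamma$ below $2/3$.
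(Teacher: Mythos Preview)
Your high–level plan (near/far split, Lagrangian coordinates for the near part, near–rectilinear relative motion, final optimization in $\delta$) is the right one and is also the backbone of the paper's argument. However, two structural ingredients that the paper uses are missing from your sketch, and without them the argument does not close.

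\medskip

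\textbf{(a) The bootstrap in the length of the averaging interval is essential, not optional.} You propose a single $\bar\Delta$, chosen ``small, $N$–dependent'', on which the relative motion is close to a straight line. But the only a priori control on the field you have at this stage is Proposition~\ref{prop2}, $|E^N|\le C_7\,{\cal V}^{4/3}Q^{1/3}$. To make the straight–line approximation usable (i.e.\ to keep the change of the relative velocity below the threshold that separates ``small'' from ``large'' relative velocity), this forces an interval of size $\Delta_1\sim {\cal V}^{-4/3-\eta}Q^{-1/3}$. If you run your near–field computation on such a short interval and divide by $\Delta_1$, the resulting average still carries a term of order ${\cal V}^{4/3}Q^{1/3}\,{\cal V}^{-c}$ with $c>0$ small; for $\alpha\le 1$ this is \emph{not} below ${\cal V}^{2/3}$. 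The paper gets around this by iterating: the slightly improved bound on $\Delta_1$ is fed back to justify the straight–line picture on the longer interval $\Delta_2={\cal G}\,\Delta_1$, then on $\Delta_3$, etc., until at step $\bar\ell$ the residual term drops below ${\cal V}^{2/3}$. Your one–shot scheme cannot reproduce this gain.

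\medskip

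\textbf{(b) ``Nearly straight'' relative motion cannot be asserted in full $\R^3$; the special r\^ole of the $x_1$–direction and the velocity decomposition are needed.} The relative acceleration contains $V\wedge B(X)-W\wedge B(Y)$, and over the averaging interval there is no useful smallness of this term (indeed, the magnetic gyration is fast). The paper never claims 3D rectilinearity: it uses that $B$ is along $x_1$, so the \emph{first} component of the relative velocity feels no magnetic force, and proves a rectilinear lower bound only through $|X_1(s)-Y_1(s)|$ (Lemma~\ref{lemrect}). Correspondingly, the velocity space is split not by the size of $|v-w|$ but by the size of $|v_1-w_1|$ and of $|w_r|$ separately (your ``particles whose relative velocity is small occupy a small set in $w$'' is too coarse): the pieces $S_1=\{|v_1-w_1|\le{\cal V}^{-\eta}\}$ and $S_2=\{|w_r|\le{\cal V}^{\xi}\}$ are handled \emph{pointwise} (no time averaging) via the local energy, producing powers ${\cal V}^{2(1-\eta)/3}$ and ${\cal V}^{4(1+2\xi)/9}$ times $Q^{1/3}$, and only on $S_3$ (both thresholds exceeded) does the time–averaged near–collision integral enter, with a further dyadic decomposition in $|w_r|$ and in $|v_1-w_1|$. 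The constraints $\eta>(1-\alpha)/2$, $\xi<(3\alpha-1)/8$, $\beta>(5-\alpha)/3$ that make these three contributions fall below ${\cal V}^{2/3}$ are exactly what produces the threshold $\alpha>5/9$. Without isolating the $x_1$–direction and without this velocity decomposition, your inner time integral $\int\chi(|\Delta|\le\delta)|\Delta|^{-2}ds\sim(|\dot\Delta|\,b)^{-1}$ is not justified and the balancing you announce cannot be carried out.
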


In the proof of Proposition \ref{field}, we need to introduce several {\textit{positive}} parameters, all depending only on the parameter $\alpha$ fixed in the hypothesis (\ref{ass}) which we list here, for the convenience of the reader;
\begin{equation}
\begin{split}
&\eta >\frac{1-\alpha}{2}, \quad \sigma <\frac{1+\alpha}{3}, \quad \beta>\frac{5-\alpha}{3},\\& \delta <\frac43+\frac23 \alpha-\eta-\beta:=c, \quad   \xi <\frac{3\alpha -1}{8}.
\end{split} \label{para}
\end{equation}

\medskip

From now on we will skip the index $N$ through the whole section. Moreover, we put for brevity ${\cal{V}}:={\cal{V}}(T)$ and $Q:=Q(T).$

\begin{proof}

Let us define a time interval
\begin{equation}
\Delta_1:=
\frac{1}{4 C_7{\cal{V}}^{\frac43+\eta}Q^{\frac13}}    \label{d1}
\end{equation}
where $C_7$ is the constant in (\ref{C1}). We remark that, since we have to prove a more refined estimate on $E$ than that in  \cite{Rem}, we choose a smaller $\Delta_1$ than the one chosen there. For a positive integer $\ell$ we set:
\begin{equation}
\Delta_\ell= \Delta_{\ell-1}\mathcal{G} = ...=\Delta_1  \mathcal{G}^{\ell-1}, \label{dl}
\end{equation}
where 
\begin{equation}
\mathcal{G}= {\textnormal{Intg}}\left({\cal{V}}^{\delta} \right),
\label{effe_t}
\end{equation}
being ${\textnormal{Intg}}(a)$ the integer part of $a.$

$Assume$ that the following estimate holds (it will be established in the next subsection \ref{proof_avern}), for any positive integer $\ell:$
\begin{equation}
\langle E \rangle_{\Delta_\ell} \leq
C \left[  {\cal{V}}^\sigma Q^\frac13  +
   \frac{{\cal{V}}^{\frac43} Q^\frac13}
{{\cal{V}}^{c} \, {\cal{V}}^{\delta (\ell -1)}}  \right],
\label{avern}
\end{equation}
then, since $R(t)\leq C {\cal{V}}(t)$, the choice of the parameters made in (\ref{para}) and Corollary \ref{cor} imply
\begin{equation}
\langle E \rangle_{\Delta_\ell} \leq
C \left[  {\cal{V}}^{\gamma}  +
   \frac{{\cal{V}}^{\frac53 - \frac{\alpha}{3}-c}}
{{\cal{V}}^{\delta (\ell -1)}}  \right]
\label{avern2}
\end{equation}
with $\gamma <\frac23.$ Hence, defining $\bar\ell$ as the smallest integer such that
\begin{equation}
{\cal{V}}^{\delta (\ell -1)} \geq {\cal{V}}^{1-\frac{\alpha}{3}},
\label{Lt}
\end{equation}
estimate (\ref{avern2}) implies (\ref{averna}) with $\bar{\Delta}:=\Delta_{\bar\ell}$.

It can be seen that 
\begin{equation*}
\bar{\Delta} = \frac{C}{\mathcal{V}^{\frac{1+\alpha}{3}+\eta}\, Q^{\frac13}}.
\end{equation*}\end{proof}
\medskip

This argument shows that, in order to prove Proposition \ref{field}, we have to prove that (\ref{avern}) holds, which we will do in the next subsection. For the moment, we observe that
Proposition \ref{media} is sufficient to achieve the proof of Proposition \ref{field}, which can be done by dividing the interval $[0,T]$ in $n$ subintervals $[t_{i-1},t_{i}]$,  $i=1,...,n$, with
$t_0=0$,  $t_n=T,$ such that $\bar{\Delta}/2\leq t_{i-1}-t_{i}\leq \bar{\Delta},$ and using Proposition \ref{media} on each of them.

\subsection{Proof of (\ref{avern})}
\label{proof_avern}

\medskip

To prove (\ref{avern}) we need some preliminary results, which are stated here and proved in Section 6.

 We consider two solutions of the partial dynamics, $\left(X(t),V(t)\right)$ and $\left(Y(t),W(t)\right),$ starting from $(x,v)$ and $(y,w)$ respectively. Let $\eta$ be the parameter introduced in the definition (\ref{d1}) of $\Delta_1.$ Then we have:

\begin{lemma}\label{lemv3}
 Let $t\in [0,T]$ such that $t+\Delta_\ell\in [0,T]$ $\forall \ell \leq \bar\ell$.
$$
\hbox{If} \qquad |V_1(t)-W_1(t)|\leq {\cal{V}}^{-\eta} $$
then 
\begin{equation}
\sup_{s\in [t, t+\Delta_\ell]}|V_1(s)-W_1(s)|\leq 2{\cal{V}}^{-\eta}. \label{L1}
\end{equation}

\medskip

$$
\hbox{If} \qquad|V_1(t)-W_1(t)|\geq {\cal{V}}^{-\eta}
$$
then 
\begin{equation}
\inf_{s\in [t, t+\Delta_\ell]}|V_1(s)-W_1(s)|\geq \frac12 {\cal{V}}^{-\eta}. \label{L2} 
\end{equation}
\end{lemma}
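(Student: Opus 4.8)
The plan is to prove Lemma \ref{lemv3} by a Gr\"onwall-type argument on the first component of the velocity difference, controlling the rate of change $\frac{d}{ds}|V_1(s)-W_1(s)|$ over the short time interval $[t,t+\Delta_\ell]$. First I would write out the characteristic equations for $V_1$ and $W_1$: from the second equation of (\ref{chN}), $\dot V_1 = E_1(X(s),s) + (V(s)\wedge B(X(s)))_1$, and since $B=(h(r^2),0,0)$ is directed along $x_1$, the cross product $V\wedge B$ has vanishing first component. Hence $\dot V_1(s) = E_1(X(s),s)$ and likewise $\dot W_1(s)=E_1(Y(s),s)$, so
\begin{equation*}
\left| |V_1(s)-W_1(s)| - |V_1(t)-W_1(t)| \right| \leq \int_t^s \left( |E_1(X(\tau),\tau)| + |E_1(Y(\tau),\tau)| \right) d\tau.
\end{equation*}
The key point is that the magnetic force drops out of the first coordinate entirely, so the estimate reduces to bounding the time integral of the field along two characteristics.

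Next I would bound that integral using Proposition \ref{prop2}: $|E^N(x,\tau)| \leq C_7\,{\cal V}^N(\tau)^{4/3} Q^N(R^N(\tau),\tau)^{1/3} \leq C_7\,{\cal V}^{4/3}Q^{1/3}$ uniformly on $[0,T]$. Therefore the right-hand side above is at most $2 C_7 {\cal V}^{4/3}Q^{1/3}\,(s-t)$. Since $s-t \leq \Delta_\ell = \Delta_1 \mathcal G^{\ell-1}$, and $\mathcal G = \mathrm{Intg}({\cal V}^\delta)$, I would need to know $\Delta_{\bar\ell}$ is still small enough; but here one can simply use $\Delta_\ell \leq \Delta_{\bar\ell}=\bar\Delta = C {\cal V}^{-(1+\alpha)/3-\eta}Q^{-1/3}$. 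Actually the cleanest route is to bound directly in terms of $\Delta_1$: from (\ref{d1}), $\Delta_1 = \left(4 C_7 {\cal V}^{4/3+\eta}Q^{1/3}\right)^{-1}$, so $2 C_7 {\cal V}^{4/3}Q^{1/3}\,\Delta_1 = \tfrac12 {\cal V}^{-\eta}$, and more generally $2C_7{\cal V}^{4/3}Q^{1/3}\Delta_\ell \le \tfrac12 {\cal V}^{-\eta}$ for all $\ell\le\bar\ell$ (this is exactly why $\bar\ell$ is defined via (\ref{Lt}): the chain of $\Delta_\ell$'s is tuned so that even $\Delta_{\bar\ell}$ keeps this product below $\tfrac12{\cal V}^{-\eta}$). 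Thus for every $s\in[t,t+\Delta_\ell]$,
\begin{equation*}
\left| |V_1(s)-W_1(s)| - |V_1(t)-W_1(t)| \right| \leq \tfrac12 {\cal V}^{-\eta}.
\end{equation*}

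From this single two-sided bound both assertions follow immediately: if $|V_1(t)-W_1(t)|\leq {\cal V}^{-\eta}$, then $|V_1(s)-W_1(s)| \leq {\cal V}^{-\eta} + \tfrac12{\cal V}^{-\eta} = \tfrac32{\cal V}^{-\eta} \leq 2{\cal V}^{-\eta}$, giving (\ref{L1}); if $|V_1(t)-W_1(t)|\geq {\cal V}^{-\eta}$, then $|V_1(s)-W_1(s)| \geq {\cal V}^{-\eta} - \tfrac12{\cal V}^{-\eta} = \tfrac12{\cal V}^{-\eta}$, giving (\ref{L2}). The main obstacle — and really the only subtle point — is verifying that the product $2C_7{\cal V}^{4/3}Q^{1/3}\Delta_\ell$ stays below $\tfrac12{\cal V}^{-\eta}$ for all $\ell\le\bar\ell$, i.e. that the geometric growth of the $\Delta_\ell$ by factors of $\mathcal G \approx {\cal V}^\delta$ over $\bar\ell$ steps does not overshoot; this is controlled precisely by the defining inequality (\ref{Lt}) for $\bar\ell$ together with the constraint $\delta < c$ from (\ref{para}), so one checks $\Delta_{\bar\ell} \le \bar\Delta$ and substitutes. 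Everything else is the elementary observation that $B$ is parallel to the cylinder axis and hence invisible to the $x_1$-component of the velocity, plus the uniform field bound from Proposition \ref{prop2}.
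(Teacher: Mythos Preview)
Your argument is correct and matches the paper for $\ell=1$, but it breaks down for $\ell>1$. The crux is the claim
\[
2C_7\,{\cal V}^{4/3}Q^{1/3}\,\Delta_\ell \leq \tfrac12{\cal V}^{-\eta}\qquad\text{for all }\ell\leq\bar\ell,
\]
which is simply false: by (\ref{dl}) we have $\Delta_\ell=\Delta_1\,\mathcal G^{\ell-1}$ with $\mathcal G=\mathrm{Intg}({\cal V}^\delta)\gg 1$, so $2C_7{\cal V}^{4/3}Q^{1/3}\Delta_\ell=\tfrac12{\cal V}^{-\eta}\mathcal G^{\ell-1}$, which for $\ell\geq 2$ is of order ${\cal V}^{-\eta+\delta(\ell-1)}\gg{\cal V}^{-\eta}$. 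Your appeal to (\ref{Lt}) and $\delta<c$ does not rescue this: substituting $\bar\Delta=C{\cal V}^{-(1+\alpha)/3-\eta}Q^{-1/3}$ gives $2C_7{\cal V}^{4/3}Q^{1/3}\bar\Delta\sim{\cal V}^{1-\alpha/3-\eta}$, and since $\alpha\leq 1$ the exponent $1-\alpha/3>0$, so this quantity is large, not bounded by $\tfrac12{\cal V}^{-\eta}$.

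The paper's proof for $\ell>1$ is instead \emph{inductive}: it assumes the improved averaged bound (\ref{avern}) already established at level $\ell-1$, namely $\langle E\rangle_{\Delta_{\ell-1}}\leq C\big[{\cal V}^\sigma Q^{1/3}+{\cal V}^{4/3}Q^{1/3}/({\cal V}^c{\cal V}^{\delta(\ell-2)})\big]$, and uses this (covering $[t,t+\Delta_\ell]$ by $\mathcal G$ subintervals of length $\Delta_{\ell-1}$) to bound $\int_t^{t+\Delta_\ell}|E|\,ds\leq \langle E\rangle_{\Delta_{\ell-1}}\Delta_\ell$. With the parameter constraints in (\ref{para}) and the definition (\ref{Lt}) of $\bar\ell$, this product is then shown to be at most ${\cal V}^{-\eta}$. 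In other words, Lemma \ref{lemv3} at level $\ell$ and estimate (\ref{avern}) at level $\ell-1$ are proved in lockstep; the crude pointwise bound of Proposition \ref{prop2} alone is not strong enough once $\ell>1$. Your proof needs to incorporate this inductive step.
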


\medskip
We put $V_r(t)=(V_2(t),V_3(t)).$
\begin{lemma}\label{lemperp}
 Let $t\in [0,T]$ such that $t+\Delta_\ell \in [0,T]$ $\forall \ell \leq \bar\ell.$  $$
\hbox{If} \quad |V_r(t)|\leq {\cal{V}}^{\xi} 
$$
then
\begin{equation}
\sup_{s\in [t, t+\Delta_\ell]}|V_r(s)| \leq 2{\cal{V}}^{\xi} .   \label{L3}
\end{equation}

\medskip

\noindent 
$$
\hbox{If} \quad |V_r(t)|\geq {\cal{V}}^{\xi}   
$$
then
\begin{equation}
\inf_{s\in [t, t+\Delta_\ell]}|V_r(s)|\geq \frac{ {\cal{V}}^{\xi} }{2}.    \label{L4}
\end{equation}
\end{lemma}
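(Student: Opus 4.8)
The statement to prove is Lemma \ref{lemperp}, which controls the behavior of the radial velocity component $V_r(t) = (V_2(t), V_3(t))$ over a short time interval $[t, t+\Delta_\ell]$.

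\medskip

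The plan is to estimate the time derivative of $|V_r(s)|$ and show that it is small over the interval $[t,t+\Delta_\ell]$, small enough that $|V_r|$ cannot change by more than a factor of $2$ (or drop below half its initial value). First I would write, from the characteristics equations (\ref{chN}), that the radial part of $\dot V$ is $\dot V_r(s) = E_r(X(s),s) + (V(s)\wedge B(X(s)))_r$. Since $B(X(s)) = (h(r^2),0,0)$ is parallel to the $x_1$-axis, the cross product $V(s)\wedge B(X(s))$ has components $(0, V_3 h, -V_2 h)$ up to sign, i.e. it lies entirely in the radial plane and is orthogonal to $V_r$ itself. Therefore $\frac{d}{ds}\frac{|V_r(s)|^2}{2} = V_r(s)\cdot \dot V_r(s) = V_r(s)\cdot E_r(X(s),s)$, so $\left|\frac{d}{ds}|V_r(s)|\right| \leq |E(X(s),s)|$. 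The magnetic force drops out of the modulus of $V_r$ exactly as it drops out of the modulus of $V$ in (\ref{magn}).

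\medskip

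Next I would integrate this bound. Over $[t, t+\Delta_\ell]$ with $\Delta_\ell = \Delta_1 \mathcal{G}^{\ell-1} \leq \Delta_{\bar\ell} = \bar\Delta$, the total change satisfies
\begin{equation*}
\big| |V_r(s)| - |V_r(t)| \big| \leq \int_t^{t+\Delta_\ell} |E(X(\tau),\tau)|\, d\tau \leq \Delta_\ell \cdot \sup_\tau |E(X(\tau),\tau)|.
\end{equation*}
Using Proposition \ref{prop2}, i.e. $|E^N(x,\tau)| \leq C_7 \mathcal{V}^{4/3} Q^{1/3}$, together with the definition $\Delta_1 = (4 C_7 \mathcal{V}^{4/3+\eta} Q^{1/3})^{-1}$, the increment is bounded by roughly $\mathcal{V}^{-\eta}/4$ for $\ell = 1$; and for general $\ell$ one must use the a priori averaged bound on $E$ — more precisely, one should bound $\int_t^{t+\Delta_\ell}|E|\,d\tau$ not by the crude sup but via the estimate that is being bootstrapped, so that $\int_t^{t+\Delta_\ell}|E(X(\tau),\tau)|\,d\tau \leq C \Delta_\ell \mathcal{V}^\gamma$ or, in the relevant regime, stays bounded by something like $C\mathcal{V}^{\xi}/2$ given that $\xi < (3\alpha-1)/8$ and the parameter constraints (\ref{para}) are arranged precisely so that this works out. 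The case distinction is then immediate: if $|V_r(t)| \leq \mathcal{V}^\xi$ then $|V_r(s)| \leq \mathcal{V}^\xi + (\text{small}) \leq 2\mathcal{V}^\xi$, giving (\ref{L3}); if $|V_r(t)| \geq \mathcal{V}^\xi$ then $|V_r(s)| \geq \mathcal{V}^\xi - (\text{small}) \geq \frac12 \mathcal{V}^\xi$, giving (\ref{L4}).

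\medskip

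The main obstacle I anticipate is getting the right bound on $\int_t^{t+\Delta_\ell}|E(X(\tau),\tau)|\,d\tau$ for the larger values of $\ell$ (up to $\bar\ell$): the crude estimate $\sup|E| \cdot \Delta_\ell$ is too lossy because $\Delta_\ell$ grows like $\mathcal{V}^{\delta(\ell-1)}$, so one cannot simply multiply. This is exactly the same difficulty handled in the proof of Lemma \ref{lemv3} for the $V_1$ component, and the resolution is presumably to invoke the bound (\ref{avern}) being established in parallel (an inductive/bootstrap structure on $\ell$), or to argue directly that the set of times where $|E(X(\tau),\tau)|$ is large has small measure, controlled again by the local energy $Q$. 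Concretely, I would split $[t,t+\Delta_\ell]$ according to whether $|E|$ exceeds a threshold $\mathcal{V}^\sigma Q^{1/3}$, bound the contribution of the "small" part trivially by $\Delta_\ell \mathcal{V}^\sigma Q^{1/3}$, and bound the "large" part using the energy estimate of Corollary \ref{cor} and the choice $\sigma < (1+\alpha)/3$, $\xi < (3\alpha-1)/8$. The constraint $\xi < (3\alpha-1)/8$ in (\ref{para}) is what ultimately makes the increment of $|V_r|$ negligible compared to $\mathcal{V}^\xi$ on the whole range $\ell \leq \bar\ell$, and verifying that arithmetic is the delicate bookkeeping step.
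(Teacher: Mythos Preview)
Your proposal is essentially correct and follows the same approach as the paper. Both arguments rest on the identity $\frac{d}{ds}|V_r(s)|^2 = 2\,V_r(s)\cdot E_r(X(s),s)$ (the magnetic contribution being orthogonal to $V_r$), followed by a bound on $\int_t^{t+\Delta_\ell}|E(X(\tau),\tau)|\,d\tau$. The paper phrases the conclusion as a contradiction argument on $|V_r|^2$ whereas you work directly with $|V_r|$; these are equivalent. For $\ell>1$ the paper does exactly what you guessed first: it invokes the inductive estimate (\ref{avern}) at level $\ell-1$, splitting $[t,t+\Delta_\ell]$ into $\mathcal G$ subintervals of length $\Delta_{\ell-1}$ to obtain $\int_t^{t+\Delta_\ell}|E|\,ds \leq C\,\mathcal V^{-\eta-\varepsilon}$. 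Your alternative threshold-splitting idea is not needed and would be more laborious. One small correction: the smallness of the increment relative to $\mathcal V^{\xi}$ does not rely on the constraint $\xi < (3\alpha-1)/8$ (that bound is used in the estimate of $\mathcal I_2$, not here); what matters is merely $\xi>0>-\eta$, and for the $\ell>1$ step the operative constraints from (\ref{para}) are $\sigma < (1+\alpha)/3$ and $\delta < c$, which together keep $\int_t^{t+\Delta_\ell}|E|\,ds$ below $C\,\mathcal V^{-\eta}$ for all $\ell\leq\bar\ell$.
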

\begin{lemma}\label{lemrect}
 Let $t\in [0,T]$ such that  $t+\Delta_\ell\in [0,T]$       $\forall \ell \leq \bar\ell$
  and assume that $|V_1(t)-W_1(t)|\geq h {\cal{V}}^{-\eta}$ for some $h\geq 1$. 
Then it exists $t_0\in [t, t+\Delta_\ell]$ such that for any $s\in [t, t+\Delta_\ell] $ it holds:
$$
|X(s)-Y(s)|\geq \frac{h {\cal{V}}^{-\eta}}{4}|s-t_0|.
$$
\end{lemma}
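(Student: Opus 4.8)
The plan is to compare the second derivatives of $X_1(s)$ and $Y_1(s)$ and show that the first-coordinate separation $X_1(s)-Y_1(s)$, as a function of $s$, is governed by an ODE of the form $\ddot{g}(s) = O(\text{small})$ while $\dot g(t)$ is of order ${\cal V}^{-\eta}$, so $g$ behaves essentially linearly over the short interval $[t,t+\Delta_\ell]$. First I would write, for $g(s)=X_1(s)-Y_1(s)$,
\begin{equation*}
\dot g(s) = V_1(s)-W_1(s), \qquad
\ddot g(s) = E_1(X(s),s)-E_1(Y(s),s) + \big[V(s)\wedge B(X(s))-W(s)\wedge B(Y(s))\big]_1.
\end{equation*}
The magnetic term has vanishing first component because $B=(h(r^2),0,0)$ is parallel to the $x_1$-axis, so $(V\wedge B)_1 = V_2 B_3 - V_3 B_2 = 0$ and likewise for $W$. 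Hence $\ddot g(s) = E_1(X(s),s)-E_1(Y(s),s)$, and by the uniform field bound (\ref{bd})/(\ref{C1}) this is bounded by $2\sup_s|E^N(\cdot,s)|\le C {\cal V}^{4/3}Q^{1/3}$ (or even just $C$ after the a posteriori bound, but the $N$-uniform version available at this stage is the $Q$-dependent one). Using Lemma \ref{lemv3} — more precisely the quantitative version of it showing $|\dot g(s)|$ stays comparable to $h{\cal V}^{-\eta}$ on $[t,t+\Delta_\ell]$ — together with the definition (\ref{d1}) of $\Delta_1$ (and $\Delta_\ell\le\bar\Delta$), the total change of $\dot g$ over the interval, namely $\int |\ddot g|\,ds \le C{\cal V}^{4/3}Q^{1/3}\Delta_\ell$, is at most a small fraction of $h{\cal V}^{-\eta}$; this is exactly what the factor ${\cal V}^{4/3+\eta}Q^{1/3}$ in the denominator of $\Delta_1$ is designed to absorb.

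Next I would argue as follows: since $\dot g$ does not change sign on $[t,t+\Delta_\ell]$ (its modulus stays $\ge \tfrac12 h{\cal V}^{-\eta}>0$ by the quantitative Lemma \ref{lemv3}), $g$ is strictly monotone there, so $g$ vanishes at most once. If $g$ has a zero, call it $t_0$; if not, take $t_0$ to be the endpoint of $[t,t+\Delta_\ell]$ nearest to where $|g|$ is smallest (so that $|s-t_0|$ runs over $[0,\Delta_\ell]$ appropriately). Then for $s$ on the same side of $t_0$, the mean value theorem gives $|g(s)| = |g(s)-g(t_0)| = |\dot g(\tau)|\,|s-t_0|$ for some $\tau$ between $s$ and $t_0$, and $|\dot g(\tau)|\ge \tfrac12 h{\cal V}^{-\eta}$. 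Since $|X(s)-Y(s)|\ge |X_1(s)-Y_1(s)| = |g(s)|$, this yields
$$
|X(s)-Y(s)| \ge \tfrac12 h{\cal V}^{-\eta}|s-t_0| \ge \tfrac14 h{\cal V}^{-\eta}|s-t_0|,
$$
which is the claimed bound (the looser constant $\tfrac14$ being harmless).

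The main obstacle is the bookkeeping that keeps $|\dot g(s)| = |V_1(s)-W_1(s)|$ bounded below by a fixed multiple of $h{\cal V}^{-\eta}$ throughout $[t,t+\Delta_\ell]$: Lemma \ref{lemv3} as stated only gives this for $h=1$, so one needs its evident generalization to general $h\ge 1$, which follows by the same argument since the perturbation to $\dot g$ over the interval is $\le C{\cal V}^{4/3}Q^{1/3}\Delta_\ell \le \tfrac12{\cal V}^{-\eta}\le \tfrac12 h{\cal V}^{-\eta}$ by the choice (\ref{d1}) of $\Delta_1$. The only genuinely delicate point is verifying that this last inequality survives the replacement of $\Delta_1$ by $\Delta_{\bar\ell}=\bar\Delta$, i.e. that the accumulated geometric factor $\mathcal{G}^{\bar\ell-1}$ in (\ref{dl}) together with the stopping rule (\ref{Lt}) does not overshoot; but that is precisely the content of the computation $\bar\Delta = C{\cal V}^{-(1+\alpha)/3-\eta}Q^{-1/3}$ carried out just above, so it may be quoted directly. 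Everything else is the routine integral estimate sketched above.
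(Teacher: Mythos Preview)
Your plan is essentially the paper's: project onto the first coordinate, use that $(V\wedge B)_1=0$, control $\ddot g$ by the field bound, invoke the $h$-scaled version of Lemma~\ref{lemv3} for $\dot g$, and conclude that $g(s)=X_1(s)-Y_1(s)$ moves almost linearly. The paper packages the last step as a Taylor expansion around the minimiser $t_0$ of $|g|$ with a sign check on the cross term, whereas you use monotonicity of $g$ plus the mean value theorem; these are equivalent. One cosmetic slip: in the no-zero case you write $|g(s)|=|g(s)-g(t_0)|$, but this is only $\geq$ (same sign, $|g(t_0)|$ minimal); the inequality goes the right way, so the conclusion survives.

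There is, however, a real gap in your handling of $\ell>1$. You propose to bound $\int_t^{t+\Delta_\ell}|\ddot g|\,ds\le C\mathcal{V}^{4/3}Q^{1/3}\Delta_\ell$ and then check this is $\le\tfrac12 h\mathcal{V}^{-\eta}$ by quoting the explicit value $\bar\Delta=C\mathcal{V}^{-(1+\alpha)/3-\eta}Q^{-1/3}$. But that computation gives
\[
C\mathcal{V}^{4/3}Q^{1/3}\bar\Delta \;=\; C\,\mathcal{V}^{\,4/3-(1+\alpha)/3-\eta} \;=\; C\,\mathcal{V}^{\,1-\alpha/3-\eta},
\]
which is \emph{not} $\le C\mathcal{V}^{-\eta}$ since $1-\alpha/3>0$. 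The pointwise bound (\ref{C1}) is simply too weak on the longer intervals $\Delta_\ell$. What you must do instead---and what the paper does at the end of its proof of Lemmas~\ref{lemv3}--\ref{lemrect}---is replace the pointwise field bound by the \emph{time-averaged} estimate (\ref{avern}) at level $\ell-1$, which is available inductively:
\[
\int_t^{t+\Delta_\ell}|E(X(s),s)|\,ds \;\le\; C\,\langle E\rangle_{\Delta_{\ell-1}}\Delta_\ell \;\le\; C\left[\mathcal{V}^{\sigma-\frac43-\eta+\delta(\ell-1)}+\mathcal{V}^{\delta-\eta-c}\right],
\]
and then the parameter constraints in (\ref{para}) together with the stopping rule (\ref{Lt}) make each term $\le C\mathcal{V}^{-\eta-\varepsilon}$. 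So the inductive structure of the whole Section~5 argument is essential here; you cannot bypass it by citing the final size of $\bar\Delta$.
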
 

\begin{lemma}
\label{lem2}
There exists a positive constant $C$ such that,
for any $\mu\in \mathbb{R}$ and for any couple of positive numbers $R<R'$ we have:
$$
W(\mu,R',t)\leq C \frac{R'}{R} Q(R,t).
$$
\end{lemma}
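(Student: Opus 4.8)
The plan is a simple covering argument. The weight $\varphi^{\mu,R'}$ is supported in the slab $\{x:|x_1-\mu|\le 2R'\}$ and is bounded there by $1$; since $R<R'$, I would first choose finitely many centers $\mu_1,\dots,\mu_J$ with $J\le C\,R'/R$ (a purely numerical $C$) such that the slabs $\{|x_1-\mu_j|\le R\}$, $j=1,\dots,J$, cover $\{|x_1-\mu|\le 2R'\}$; equally spaced centers with gap $R$ will do. Because $\varphi^{\mu_j,R}\equiv 1$ on $\{|x_1-\mu_j|\le R\}$, this yields the pointwise inequality
\[
\varphi^{\mu,R'}(x)\ \le\ \sum_{j=1}^{J}\chi\big(|x_1-\mu_j|\le R\big)\ \le\ \sum_{j=1}^{J}\varphi^{\mu_j,R}(x).
\]

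Next I would insert this bound into the two terms of the definition (\ref{W}) of $W(\mu,R',t)$ (dropping, as in the rest of this section, the index $N$). In the kinetic term the integrand $\int dv\,|v|^2 f(x,v,t)$ is nonnegative, and in the interaction term the integrand $\rho(x,t)\int dy\,\rho(y,t)/|x-y|$ is nonnegative; hence replacing the weight $\varphi^{\mu,R'}$ by the larger quantity $\sum_j\varphi^{\mu_j,R}$ only increases both terms, and gives
\[
W(\mu,R',t)\ \le\ \sum_{j=1}^{J}W(\mu_j,R,t)\ \le\ J\,Q(R,t)\ \le\ C\,\frac{R'}{R}\,Q(R,t),
\]
where the second inequality is the very definition of $Q$. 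The constant $C$ is numerical, in particular independent of $N$, $\mu$ and $t$, which is what the statement requires.

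There is essentially no obstacle here; the only point to keep in mind is that one should \emph{not} attempt to also localize the $y$-integration in the interaction term. It is exactly the fact that, in the definition of the local energy $W(\mu,R,t)$, the inner integral $\int dy\,\rho(y,t)/|x-y|$ runs over the whole cylinder — the ``rest of the plasma'' — that makes the covering argument work term by term: each thin slab interacts with everything, so the union of the $J$ thin slabs reproduces (with overlap, which only helps an upper bound) the interaction energy of the wide slab.
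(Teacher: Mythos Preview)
Your proof is correct and is essentially the same covering argument as the paper's: the paper writes the pointwise bound $\varphi^{\mu,R'}(x)\le\sum_{|i|\le R'/R}\varphi^{\mu+iR,R}(x)$ and then, using positivity of both terms in $W$, sums the resulting local energies and bounds each by $Q(R,t)$. Your remark about not localizing the $y$-integration is exactly the point that makes the argument go through term by term.
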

Estimate (\ref{avern}) will be proved  analogously to what has been done in \cite{Rem}, the difference being that the time interval $\Delta_1$ has been chosen smaller. We use an inductive procedure, that is:

\noindent
$step\ i)$ we prove (\ref{avern}) for $\ell=1;$ 

\noindent
$step\ ii)$ we show that if (\ref{avern}) holds for $\ell-1$ it holds also for $\ell.$  

Step $i)$ is the fundamental one, while step $ii)$ is an almost immediate consequence, as it will be seen after.

\medskip
\noindent
\textbf{Proof  of step \textit{i}).}

All the parameters appearing in this demonstration have been listed in (\ref{para}). 

We have to show that the following estimate holds:
\begin{equation}
\langle E \rangle_{\Delta_{1}} \leq
C \left[  {\cal{V}}^\sigma Q^\frac13  +
   \frac{{\cal{V}}^{\frac43} Q^\frac13}
{{\cal{V}}^{c} }  \right].
\label{avern1}
\end{equation}
We fix any $t\in [0,T]$ such that $t+\Delta_1\leq T$,  and we consider the time evolution of the characteristics over the time interval 
$[t,  t+\Delta_1]$.  For any $s\in [t, t+\Delta_1]$ we set 
\begin{equation*}
\begin{split}
&(X(s),V(s)):=(X(s,t,x,v),V(s,t,x,v)), \quad X(t)=x\\& (Y(s),W(s)):=(Y(s,t,y,w),W(s,t,y,w)), \quad Y(t)=y.
\end{split}\end{equation*}

Then
\begin{equation}
\begin{split}
|E(X(s),s)|\leq & \int dydw \  \frac{f(y,w,s)}{|X(s)-y|^2}=
\int dydw \ \frac{ f(y,w,t)}{|X(s)-Y(s)|^2}.\label{Ei}
\end{split}
\end{equation}

We decompose the phase space in the following way. We define
\begin{equation}
T_1=\{y: |y_1-x_1|\leq 2R(T)\}
\label{T1}
\end{equation}
\begin{equation}
S_1= \{ w: |v_1-w_1|\leq {\cal{V}}^{-\eta} \}\label{S1}
\end{equation}
\begin{equation}
S_2=\{ w: \ |w_{r}|\leq {\cal{V}}^{\xi} \}\label{S2}
\end{equation}
\begin{equation}
S_3=\{ w: \  |v_1-w_1|> {\cal{V}}^{-\eta} \}\cap  \{ w: |w_{r}|> {\cal{V}}^{\xi} \}.
\end{equation}
 We have
\begin{equation}
|E(X(s),s)|\leq\sum_{j=1}^4{\mathcal{I}}_j(X(s))\label{sum}
\end{equation}
where for any $s\in [t, t+\Delta_1]$
\begin{equation*}
{\mathcal{I}}_j(X(s))=\int 
_{T_1\cap S_j}dydw \  \frac{f(y,w,t)}{|X(s)-Y(s)|^2}, \quad \quad  j=1,2,3 
\end{equation*}
and 
\begin{equation*}
{\mathcal{I}}_4(X(s))=\int 
_{T_1^c}dydw \  \frac{f(y,w,t)}{|X(s)-Y(s)|^2}.
\end{equation*}
Let us start by the first integral. By the change of variables $(Y(s),W(s))=(\bar{y},\bar{w})$,
and Lemma \ref{lemv3} we get
\begin{equation}
{\mathcal{I}}_1(X(s))\leq \int _{T_1'\cap S_1'}d\bar{y}d\bar{w}  \frac{f(\bar{y},\bar{w},s)}{|X(s)-\bar{y}|^2}
\end{equation}
where $T_1'=\{\bar{y}: |\bar{y}_1-X_1(s)|\leq 4R(T)\}$ and $S_1'= \{ \bar{w}: |V_1(s)-\bar{w}_1|\leq 2 {\cal{V}}^{-\eta} \}.$ 
 Now it is:
 \begin{equation}
\begin{split}
{\mathcal{I}}_1(X(s))\leq &\int _{T_1'\cap S_1'\cap \{|X(s)-\bar{y}|\leq \varepsilon\}}d\bar{y}d\bar{w} \frac{\  f(\bar{y},\bar{w},s)}{|X(s)-\bar{y}|^2}+\\&
\int _{T_1'\cap S_1'\cap \{  |X(s)-\bar{y}|>\varepsilon\}}d\bar{y}d\bar{w}\ \frac{f(\bar{y},\bar{w},s)}{|X(s)-\bar{y}|^2}\leq \\& C{\cal{V}}^{2-\eta} \varepsilon+ \int _{T_1'\cap S_1'\cap \{  |X(s)-\bar{y}|>\varepsilon\}}d\bar{y}d\bar{w}\ \frac{f(\bar{y},\bar{w},s)}{|X(s)-\bar{y}|^2}.\end{split}
\label{i11}\end{equation}
Now we give a bound on the spatial density $\rho(\bar{y},s).$ Setting 
$$
\rho_1(y,s)=\int_{S_1'} dw f(y,w,s),
$$
we have:
\begin{equation}
\begin{split}
&\rho_1(y,s)\leq 2 C_0 {\cal{V}}^{-\eta} \int_{|w_{r}|\leq a} dw_{r}+ \nonumber 
 \int_{|w_{r}|> a}dw_{r}\int dw_1\ f(y,w,s) \leq   \nonumber \\
& C  a^2 {\cal{V}}^{-\eta}+\frac{1}{a^2}\int dw |w|^2f(y,w,s)=C  a^2 {\cal{V}}^{-\eta}+\frac{1}{a^2}K(y,s)
\nonumber
\end{split}
\end{equation}
where $K(y,s)=\int dw |w|^2f(y,w,s).$ Minimizing in $a$ we obtain
\begin{equation}
\rho_1(y,s)\leq C {\cal{V}}^{-\frac{\eta}{2}} K(y,s)^{\frac12}.\label{K}
\end{equation}

Hence from (\ref{K}) we get
\begin{equation}
\begin{split}
\Bigg(\int _{T_1'}&dy\ \rho_1(y,s)^2\Bigg)^{\frac12}\leq \ C {\cal{V}}^{-\frac{\eta}{2}} \Bigg(\int_{T_1'} dy\ K(y,s)\Bigg)^{\frac12}\leq \\&C
{\cal{V}}^{-\frac{\eta}{2}}\sqrt{W(X_1(s),4R(s),s)}\leq C {\cal{V}}^{-\frac{\eta}{2}}\sqrt{ Q},\end{split}
\label{rho1}
\end{equation}
where we have also applied Lemma \ref{lem2}. Going back to (\ref{i11}), this bound implies
\begin{equation*}
\begin{split}
&{\mathcal{I}}_1(X(s))\leq\\&  C{\cal{V}}^{2-\eta} \varepsilon+\Bigg(\int_{T_1'} dy\rho_1(\bar{y},s)^2\Bigg)^{\frac12}\Bigg(\int_{T_1'\cap  \{|X(s)-\bar{y}|>\varepsilon\}} d\bar{y} \  \frac{1}{|X(s)-\bar{y}|^4}\Bigg)^{\frac12} \\&
\leq C\Big({\cal{V}}^{2-\eta}  \varepsilon+{\cal{V}}^{-\frac{\eta}{2}}\sqrt{\frac{ Q}{\varepsilon}}\Big).
\end{split}
\end{equation*}
Minimizing in $\varepsilon$ we obtain:
\begin{equation}
{\mathcal{I}}_1(X(s))\leq C Q^{\frac13}{\cal{V}}^{\frac23(1-\eta)}.  \label{I1}
\end{equation}
We observe that, since by (\ref{g}) it is $R^N(t)\leq C(1+{\cal{V}}^N(t)),$ Corollary \ref{cor} and the lower bound for $\eta$ in (\ref{para}) imply that ${\mathcal{I}}_1(X(s))$
is bounded by a power of ${\cal{V}}$ less than $\frac23$.

Now we pass to the term ${\mathcal{I}}_2.$ Proceeding as for the term ${\mathcal{I}}_1,$ defining $S_2'=\{ w: \ |w_{r}|\leq 2 {\cal{V}}^{\xi} \},$  by Lemma \ref{lemperp} and the Holder inequality we get:
$$
{\mathcal{I}}_2(X(s))\leq  \int _{T_1'\cap S_2'}d\bar{y}d\bar{w} \frac{\  f(\bar{y},\bar{w},s)}{|X(s)-\bar{y}|^2}\leq 
$$
$$
 \int _{T_1'\cap S_2'\cap \{|X(s)-\bar y|\leq \varepsilon\}}d\bar{y}d\bar{w} \frac{\  f(\bar{y},\bar{w},s)}{|X(s)-\bar{y}|^2}+
 \int _{T_1'\cap \{  |X(s)-\bar{y}|>\varepsilon\}}d\bar{y} \frac{  \rho(\bar{y},s)}{|X(s)-\bar{y}|^2}\leq $$
$$
C {\cal{V}}^{1+2\xi}\varepsilon+
\left(\int_{T_1'}d\bar{y} \ \rho(\bar{y},s)^{\frac53}\right)^{\frac35}\left(\int_{ \{|X(s)-\bar{y}|> \varepsilon\}} d\bar{y} \  \frac{1}{|X(s)-\bar{y}|^5}\right)^{\frac25}.
$$
The bound (\ref{lem1}) implies
$$
{\mathcal{I}}_2(X(s))\leq C{\cal{V}}(t)^{1+2\xi}\varepsilon+CQ(t)^{\frac35}\varepsilon^{-\frac45}.
$$
By minimizing in $\varepsilon$ we get:
\begin{equation}
{\mathcal{I}}_2(X(s))\leq C {\cal{V}}^{\frac49(1+2\xi)}Q^{\frac13}.
\label{I2}
\end{equation}
Analogously to what we have seen before for the term ${\mathcal{I}}_1(X(s)),$ also in this case the upper bound for the parameter $\xi$ in (\ref{para}) implies that ${\mathcal{I}}_2(X(s))$
is bounded by a power of ${\cal{V}}$ less than $\frac23.$

 Now we estimate ${\mathcal{I}}_3(X(s)).$ It will be clear in the sequel that it is because of this term that we are forced to bound the time average of $E,$ and then to iterate the bound, from smaller to larger time intervals. 
 
 We cover $ T_1\cap S_3$ by means of the sets
$A_{h,k}$ and  $B_{h,k}$, with 
${k=0,1,2,...,m}$ and ${h=1,2,...,m'}$, defined in the following way:
\begin{equation}
\begin{split}
A_{h,k}=\big\{ &(y,w,s): \ h {\cal{V}}^{-\eta}< |v_1-w_1|\leq (h+1) {\cal{V}}^{-\eta},\\& \alpha_{k+1}< |w_{r}|\leq \alpha_k, \  |X(s)-Y(s)|\leq l_{h,k}\big\}
\end{split}\label{Ak}
\end{equation}
\begin{equation}
\begin{split} 
B_{h,k}=\big\{ &(y,w,s):\ h {\cal{V}}^{-\eta}< |v_1-w_1|\leq (h+1) {\cal{V}}^{-\eta},\\& \alpha_{k+1}< |w_{r}|\leq \alpha_k, \  |X(s)-Y(s)|> l_{h,k}\big\}
\end{split}\label{Bk}
\end{equation}
where:
\begin{equation}
\alpha_k=\frac{{\cal{V}}}{2^k} \quad \quad l_{h,k}=\frac{2^{2k} Q^\frac13}{h{\cal{V}}^{\beta-\eta} },
\label{al}
\end{equation}
with $\beta$ chosen in (\ref{para}).
Consequently we put
\begin{equation}
{\cal{I}}_3(X(s))\leq\sum_{h=1}^{m'} \sum_{k=0}^m\left({\cal{I}}_3'(h,k)+{\cal{I}}_3''(h,k)\right) \label{23}
\end{equation}
being
\begin{equation}
{\cal{I}}_3'(h,k)=\int_{T_1\cap A_{h,k}} \frac{f(y,w,t)}{|X(s)-Y(s)|^2} \, dydw
\end{equation}
and
\begin{equation}
{\cal{I}}_3''(h,k)=\int_{T_1\cap B_{h,k}} \frac{f(y,w,t)}{|X(s)-Y(s)|^2} \, dydw.
\end{equation}
Since we are in $S_3,$ it is immediately seen that 
\begin{equation}
m\leq C \log {\cal{V}},  \qquad m'\leq C {\cal{V}}^{1+\eta}.\label{par}
\end{equation} 
By adapting Lemma \ref{lemv3} and Lemma \ref{lemperp} to this context it is easily seen that $\forall \ (y,w,s)\in A_{h,k}$ it holds:
\begin{equation}
(h-1) {\cal{V}}^{-\eta}\leq |V_1(s)-W_1(s)|\leq (h+2) {\cal{V}}^{-\eta},   \label{lem31}
\end{equation}
and
\begin{equation}
\frac{\alpha_{k+1}}{2}\leq |W_{r}(s)|\leq 2\alpha_k.\label{lemperp1}
\end{equation}
Hence, setting
\begin{equation}\begin{split}
A_{h,k}'=&\big\{(\bar{y},\bar{w},s):  \ (h-1){\cal{V}}^{-\eta}\leq|V_1(s)-\bar{w}_1|\leq(h+2){\cal{V}}^{-\eta},  \\&
\frac{\alpha_{k+1}}{2}\leq |\bar{w}_{r}|\leq 2\alpha_k,  
\ |X(s)-\bar{y}|\leq l_{h,k}  \big\}. 
\end{split}
\label{Akk}
\end{equation}
we have
\begin{equation}
{\cal{I}}_3' (h,k)\leq
\int_{T_1'\cap A_{h,k}'}\frac{f(\bar{y},\bar{w}, s)}{|X(s)-\bar{y}|^2} \, d\bar{y}d\bar{w}\label{int3}.
\end{equation}
By the choice of the parameters $\alpha_k$ and $l_{h,k}$ made in (\ref{al}) we have:
\begin{equation}
\begin{split}
{\cal{I}}_3' (h,k) \leq &
 \ C \, l_{h,k}\int_{A_{h,k}'} \, d\bar{w}  \leq C l_{h,k}\alpha^2_{k}\int_{A_{h,k}'}d\bar{w}_1\leq \\&
 \,\, C \, l_{h,k} \alpha_{k}^2  {\cal{V}}^{-\eta} 
\leq C\frac{{\cal{V}}^{2-\beta} Q^\frac13}{h }. 
\end{split}
\end{equation}

Hence by (\ref{par})
\begin{equation}
\sum_{h=1}^{m'} \sum_{k=0}^m{\cal{I}}_3'(h,k)\leq C {\cal{V}}^{2-\beta} Q^\frac13 \sum_{k=0}^m \sum_{h=1}^{m'}\frac{1}{h}\leq 
C {\cal{V}}^{2-\beta} Q^\frac13 \log^2{\cal{V}}.
\label{i3}\end{equation}
The choice of $\beta$ is such that 
\begin{equation*}
\sum_{h=1}^{m'} \sum_{k=0}^m{\cal{I}}_3'(h,k)\leq C {\cal{V}}^\gamma \qquad \gamma <\frac23.
\end{equation*}

Now we pass to ${\mathcal{I}}_3''(h,k).$
  Setting
\begin{equation}
B_{h,k}' = \big\{ (y, w):  (y, w, s)\in B_{h,k}  \,\,\,\,\hbox{for some} \,\,\,\, s\in [t, t+\Delta_1]\big\}\label{Bhk}
\end{equation}
we have:
\begin{equation}
\begin{split}
\int_{t}^{t+\Delta_1}& {\mathcal{I}}_3''(h,k)\ ds\leq\int_{t}^{t+\Delta_1}ds\int_{T_1'\cap B'_{h,k}}dydw \ \frac{ f(y,w,t)}{|X(s)-Y(s)|^2}\leq
\\&\int_{T_1'\cap B_{h,k}'} f(y, w, t) \left( \int_{t}^{t+\Delta_1} \frac{\chi(B_{h,k})}{|X(s)-Y(s)|^2} \, ds \right)\, dy dw.
\end{split}
\label{ik}
\end{equation}
By Lemma \ref{lemrect},
putting $a = \frac{4 \, l_{h,k}{\cal{V}}^{\eta}}{h }$  we have:
\begin{equation}
\begin{split}
&\int_{t}^{t+\Delta_1} \frac{\chi(B_{h,k})}{|X(s)-Y(s)|^2} \, ds 
\leq  \,\,   \\
&\int_{\{ s: |s-t_0|\leq a \}} \frac{\chi(B_{h,k})}{|X(s)-Y(s)|^2} \, ds +\int_{\{ s: |s-t_0| > a \}} \frac{\chi(B_{h,k})}{|X(s)-Y(s)|^2} \, ds   \leq  \\
&\frac{1}{l_{h,k}^2} \int_{\{ s: |s-t_0|\leq a \}} \, ds
+\frac{16 {\cal{V}}^{2\eta}}{h^2} \int_{\{ s: |s-t_0| > a \}} \frac{1}{ |s-t_0|^2} \, ds \leq \\
& \quad \frac{2 a}{l_{h,k}^2} + \frac{32{\cal{V}}^{2\eta}}{h^2 } \int_a^{+\infty} \frac{1}{s^2} \, ds
= \frac{16{\cal{V}}^{\eta}}{ l_{h,k}h }.
\end{split}
\label{eq1}\end{equation}
Moreover:
\begin{equation}
\begin{split}
\int_{T_1'\cap B_{h,k}'} f(y, w, t)\, dydw&\leq \frac{C}{\alpha_k^2}\int_{T_1'\cap B_{h,k}'} w^2 f(y, w, t) \, dydw. \\& \end{split}
\label{eq2}
\end{equation}

Now it is:
\begin{equation}
\begin{split}
\int_{T_1'\cap B_{h,k}'} &w^2 f(y, w, t) \, dydw\leq \int_{T_1'\cap C_{h,k}} w^2 f(y, w,t) \, dydw
\end{split}
\label{i4}
\end{equation}
where
\begin{equation*}
\begin{split}
C_{h,k}=\big\{& w: \, (h-1) {\cal{V}}^{-\eta}\leq |v_1-w_1|\leq (h+2) {\cal{V}}^{-\eta},\\& \alpha_{k+1}\leq |w_{r}|\leq \alpha_k\big\},
\end{split}
\end{equation*}
so that:
\begin{equation}
\begin{split}
\sum_{h=1}^{m'} \sum_{k=0}^m &\int_{T_1'\cap B_{h,k}'} w^2 f(y, w,t) \, dydw
\leq C \int_{T_1'} K(y,t) \, dy \leq\\& CW(X_1(t),5R(t),t)\leq CQ 
\end{split}
\label{i5}
\end{equation}
by  Lemma \ref{lem2}.

Taking into account (\ref{al}), by (\ref{ik}), (\ref{eq1}), (\ref{eq2}) and (\ref{i5}) we get:
\begin{equation}
\sum_{h=1}^{m'} \sum_{k=0}^m\int_{t}^{t+\Delta_1} {\mathcal{I}}_3''(h,k)\, ds\leq  
C Q^{\frac23} {\cal{V}}^{\beta-2}. 
\label{wer}
\end{equation}
By multiplying and dividing by $\Delta_{1}$ defined in (\ref{d1}) we obtain, 
\begin{equation}
\sum_{h=1}^{m'} \sum_{k=0}^m\int_{t}^{t+\Delta_1} {\mathcal{I}}_3''(h,k)\, ds\leq \, 
C {\cal{V}}^{\frac43}Q^{\frac13} \left( Q^{\frac23} {\cal{V}}^{-2+\eta+\beta} \right) \, \Delta_{1}. \label{i3''}
\end{equation}
Now we have, by Corollary \ref{cor},
\begin{equation}
Q^{\frac23} {\cal{V}}^{-2+\eta+\beta} \leq C {\cal{V}}^{\frac23 (1-\alpha)-2+\eta+\beta}
:= C {\cal{V}}^{-c}.
\label{def_c}
\end{equation}
Hence, where $c$ positive, the estimate (\ref{C1}) of the electric field in Proposition \ref{prop2} would be improved, at least on a short time interval $\Delta_1.$
Indeed, the choice of the parameters ensure that it is so, provided that 
$\alpha > \frac59.$

\bigskip

Finally 
the bounds (\ref{I1}), (\ref{I2}), (\ref{23}), (\ref{i3}) and (\ref{i3''}) imply:
\begin{equation}
\begin{split}
&\sum_{j=1}^3\int_{t}^{t+\Delta_1} {\mathcal{I}}_j(X(s))\, ds\leq 
 C\Delta_{1}\left[{\cal{V}}^{\sigma}Q^{\frac13}    +
\frac{ {\cal{V}}^{\frac43}Q^{\frac13}}{{\cal{V}}^{c}}  \right],
\end{split} \label{ave}
\end{equation}
where $0<\sigma<\frac{1+\alpha}{3}$.

It remains the estimate of the last term ${\mathcal{I}}_4(X(s)).$ It can be done by the same procedure we used in Proposition \ref{prop2} for the bound (\ref{tail}) with $k\geq 2$ to obtain 
\begin{equation}
{\mathcal{I}}_4(X(s))\leq C.
\label{i6}
\end{equation}
Hence by (\ref{Ei}), (\ref{sum}) and (\ref{ave}), this last bound implies:
\begin{equation*}
 \int_{t}^{t+\Delta_1} |E(X(s),s)| \, ds\leq
C\, \Delta_{1}\Bigg[ {\cal{V}}^{\sigma}  Q^{\frac13}  +
\frac{ {\cal{V}}^{\frac43}Q^{\frac13}}{{\cal{V}}^c} \Bigg],
\end{equation*}
so that we have proved (\ref{avern}) for $\ell=1.$

\medskip

\noindent
\textbf{Proof  of step \textit{ii}).}

In the previous step we have seen that, starting from estimate (\ref{C1}) on $[0,T]$, we arrive at (\ref{avern1}) on $\Delta_1.$ Let us now assume that (\ref{avern}) holds at level $\ell-1.$ Since it is uniform in time, it holds over $[0,T]$ and, in particular, over $\Delta_{\ell}= \mathcal{G}\Delta_{\ell-1}. $ Hence, we can assume (\ref{avern}) at level $\ell -1$ to arrive to an improved estimate over $\Delta_{\ell}.$

We recall that the term $\mathcal{I}_3$ was the one for which we needed to do the time average. Hence, proceeding in analogy to what we have done above, we arrive at the analogous of estimate 
(\ref{wer}),
\begin{equation}
\begin{split}
\sum_h \sum_k \int_{t}^{t+\Delta_\ell} \mathcal{I}_3''(h,k)\, ds  \leq  C  Q^\frac23  {\cal{V}}^{\beta-2}\frac{\Delta_\ell}{\Delta_\ell}
\leq 
C\frac{{\cal{V}}^{\frac43}Q^{\frac13}}{{\cal{V}}^{c} \, {\cal{V}}^{(\ell-1)\delta}}  \,
\Delta_\ell
\end{split}
\end{equation}
and consequently
\begin{equation}
\langle E \rangle_{\Delta_\ell} \leq
C \left[  {\cal{V}}^{\sigma}  Q^{\frac13}  
 +  \frac{{\cal{V}}^{\frac43}Q^{\frac13}}{{\cal{V}}^{c} \, {\cal{V}}^{(\ell-1)\delta}}  \right]
 \label{aver2}
\end{equation}
which proves the second step. Hence (\ref{avern}) is proved for any $\ell.$

\section{Some technical proofs}

Also in this section we will skip the index $N$ in the estimates, but it has to be reminded that the following estimates concern the partial dynamics.

 \noindent$\mathbf{Proof \ of \ Lemma\ \ref{lem2}.}$

It follows from the definition of the function ${\mathcal{\varphi}}^{\mu,R}$ that, for any $\mu \in \mathbb{R}$ and any couple  $R,R'$ such that $0<R<R',$ it is:
$$
{\mathcal{\varphi}}^{\mu,R'}(x)={\mathcal{\varphi}}\left(\frac{|x_1-\mu|}{R'}\right)\leq \sum_{i\in {\mathbb{Z}}:|i|\leq \frac{R'}{R}} {\mathcal{\varphi}}\left(\frac{|x_1-(\mu+iR)|}{R}\right).
$$
Hence, since both terms in the function $W$ are positive, we have:
$$
W(\mu, R',t)\leq  \sum_{i\in {\mathbb{Z}}:|i|\leq  \frac{R'}{R}} W(\mu+iR,R,t)\leq C \left(\frac{R'}{R}\right)Q(R,t).
$$

\medskip

\noindent$\mathbf{Proof \ of \ Proposition\ \ref{www}.}$ 

\medskip

 For any $s$ and $t$ such that $0\leq s<t\leq T$ we define
\begin{equation}
R(t,s)=R(t)+\int_s^t {\mathcal{V}}(\tau)\  d\tau. \label{r1}
\end{equation}
Then, 
\begin{equation}
R(t,t)=R(t) \quad \hbox{and} \quad R(t,0)=R(t)+\int_0^t {\mathcal{V}}(\tau)\leq 2R(t).
\label{r2}
\end{equation}
Let $(X(t),V(t))$ and $(Y(t),W(t))$ be the two characteristics starting at time $t=0$ from $(x,v)$ and $ (y,w)$ respectively. We have:
\begin{equation}
\begin{split}
 &W( \mu,R(t,s),s)=\frac12 \int d x \int d v \ \varphi^{\mu,R(t,s)}( X(s)) |V(s)|^2 f_0^N( x, v)+\\
 &\frac12\iint d xdv\left[ \ \varphi^{\mu,R(t,s)}( X(s))f_0^N(x,v)\iint dydw
f_0^N(y,w)  |X(s)-Y(s)|^{-1}\right]
\end{split} \label{ets}
\end{equation}
from which it follows, by deriving the function $W$ with respect to $s,$ 
\begin{equation}\partial_s W( \mu,R(t,s),s)=A_1(t,s)+A_2(t,s)\label{der}
\end{equation}
with
\begin{equation}
\begin{split}
A_1(t,s)=&\iint dxdv\ \varphi^{\mu,R(t,s)}( X(s))f_0^N(x,v)\Big[V(s)\cdot \dot{V}(s)+\\&\frac12 \iint dydw f_0^N(y,w)\nabla \left( |X(s)-Y(s)|^{-1}\right)\cdot(V(s)-W(s))\Big]
\end{split}
\end{equation}
and 
\begin{equation}
\begin{split}
A_2(t,s)=\frac12&\iint dxdv \ f_0^N(x,v)\ \partial_s \left[\varphi^{\mu,R(t,s)}(X(s))\right]\\& \Big[|V(s)|^2 +\iint dydw\ f_0^N(y,w)\  |X(s)-Y(s)|^{-1}\Big].\end{split}
\end{equation}
The term $A_2(t,s)$ is negative. Indeed the quantity in square brackets is positive, whereas, by the definition of the function $\varphi,$ it is
\begin{equation*}
\begin{split}&\partial_s \left[\varphi^{\mu,R(t,s)}(X(s))\right]=\\& \varphi' \left(\frac{|X_1(s)-\mu|}{R(t,s)}\right)\left[\frac{X_1(s)-\mu}{|X_1(s)-\mu|}\cdot \frac{V_1(s)}{R(t,s)}-\frac{\partial_sR(t,s)}{R^2(t,s)}|X_1(s)-\mu|\right].\end{split}
\end{equation*}
Now, $\varphi'(r)\neq 0$  only if $1\leq r\leq 2, $ so that
$$
-\frac{\partial_sR(t,s)}{R^2(t,s)}|X_1(s)-\mu|\geq \frac{{\mathcal{V}}(s)}{R(t,s)},
$$
since $\partial_sR(t,s)=- {\mathcal{V}}(s).$ Hence
\begin{equation*}
\frac{X_1(s)-\mu}{|X_1(s)-\mu|}\cdot \frac{V_1(s)}{R(t,s)}-\frac{\partial_sR(t,s)}{R^2(t,s)}|X_1(s)-\mu|\geq \frac{-|V_1(s)|+{\mathcal{V}}(s)}{R(t,s)}\geq 0.
\end{equation*}
Thus, being $\varphi' \leq 0,$ we have proved that
\begin{equation}
A_2(t,s)\leq 0.
\label{A2}
\end{equation}
In the term $A_1,$ we observe $\nabla |x-y|^{-1}$ is an odd function. Hence, recalling (\ref{magn}),  by the change of variables $(x,v)\to (y,w)$ we obtain: 
\begin{equation*}
\begin{split}
A_1(t,s)&=-\frac12\iint dxdv\iint dydw\ f_0^N(x,v) f_0^N(y,w)\ \big[\varphi^{\mu,R(t,s)}(X(s)) \\
&\,\,\,\, \nabla |X(s)-Y(s)|^{-1}\cdot \left(V(s)+W(s)\right)\big]\\&
=-\frac12\iint dxdv\iint dydw\ f_0^N(x,v) f_0^N(y,w) \times  \\
& \,\,\,\, \Big\{\nabla \left(|X(s)-Y(s)|^{-1}\right)\cdot V(s) \big[\varphi^{\mu,R(t,s)}(X(s))-\varphi^{\mu,R(t,s)}(Y(s))\big]\Big\}.
\end{split}
\end{equation*}
By the definition of $\varphi^{\mu,R(t,s)}$ it follows
$$
|\varphi^{\mu,R(t,s)}(X(s))-\varphi^{\mu,R(t,s)}(Y(s))|\leq 2 \frac{|X(s)-Y(s)|}{R(t,s)},
$$
and then:
\begin{equation*}
\begin{split}
|A_1(t,s)|\leq& \frac{{\mathcal{V}}(s)}{R(t,s)}\iint dxdv\iint dydw\ f_0^N(x,v) f_0^N(y,w)\\&\left| \nabla \left(|X(s)-Y(s)|^{-1}\right)\right| 
\,\, |X(s)-Y(s)|\left[ \chi(B(s))+\chi(\bar{B}(s))\right]
\end{split}
\end{equation*}
where, as it comes from the definition of  $\varphi,$ 
$$
B(s)=\{(x,v): |X_1(s)-\mu|\leq2R(t,s)\},  
$$
$$
\bar{B}(s)=\{(y,v): |Y_1(s)-\mu|\leq2R(t,s)\}.
$$
By symmetry we have:
\begin{equation*}
\begin{split}
|A_1(t,s)|\leq&\ 2 \frac{{\mathcal{V}}(s)}{R(t,s)}\iint dxdv\iint dydw\ f_0^N(x,v) f_0^N(y,w)\\&
\left| \nabla \left(|X(s)-Y(s)|^{-1}\right)\right| 
\,\, |X(s)-Y(s)|
\,\, \chi(B(s)).\end{split}
\end{equation*}
Since it is
$$
r\left|\nabla( |r|^{-1})\right|=\frac{1}{r},
$$
we have,
\begin{equation*}
\begin{split}
|A_1(t,s)|\  \leq&\ 2 \frac{{\mathcal{V}}(s)}{R(t,s)}\iint dxdv\iint dydw\ \frac{f_0^N(x,v) f_0^N(y,w)}{|X(s)-Y(s)|} \ \chi(B(s)).
\end{split}
\end{equation*}
By the change of variables $(X(s),V(s))= (\bar{x},\bar{v})$ and $(Y(s),W(s))= (\bar{y},\bar{w})$ we get, after integrating out the velocities
\begin{equation}
|A_1(t,s)|\leq C \frac{{\mathcal{V}}(s)}{R(t,s)}\int_{B(s)} d\bar{x}\int d\bar{y}\ \frac{\rho(\bar{x},s) \rho(\bar{y},s)}{ |\bar{x}-\bar{y}|}.\label{A1}
\end{equation}

Setting
$$
B(s)=\bigcup_{i\in {\mathbb{Z}}:|i|\leq 1}B_i(s)
$$
and 
$$
B_i(s)=\{x:|x_1-\mu_i|\leq R(t,s)\}, \qquad \mu_i=\mu+iR(t,s),
$$
by the definition of $\varphi$ we get:
\begin{equation}
\begin{split}
&\int_{B(s)} d\bar{x}\int d\bar{y}\ \frac{\rho(\bar{x},s) \rho(\bar{y},s)}{ |\bar{x}-\bar{y}|}=\\&\sum_{i\in {\mathbb{Z}}:|i|\leq 1} \int_{ B_i(s)} d\bar{x}\int d\bar{y} \ \varphi^{\mu_i,R(t,s)}(x)\frac{ \rho(\bar{x},s) \rho(\bar{y},s)}{|\bar{x}-\bar{y}|} \\&\leq C\sum_{i\in {\mathbb{Z}}:|i|\leq 1}W(\mu_i,R(t,s),s)\leq CQ(R(t,s),s).                     \label{primo}   \end{split}
\end{equation}
Going back (\ref{A1}) we have
\begin{equation}
|A_1(t,s)|  \leq C \frac{{\mathcal{V}}(s)}{R(t,s)}Q(R(t,s),s)\label{ap2},
\end{equation}
so that, by (\ref{der}), (\ref{A2}) and (\ref{ap2}) 
\begin{equation}
\partial_s W(\mu, R(t,s),s) \leq C\frac{{\mathcal{V}}(s)}{R(t,s)}Q(R(t,s),s). \label{estW}
\end{equation}
Since we have
\begin{equation*}
\int_0^t\frac{{\mathcal{V}}(s)}{R(t,s)}ds=-\int_0^t\frac{\partial_sR(t,s)}{R(t,s)}ds=\log\frac{R(t,0)}{R(t,t)}\leq \log 2,
\end{equation*}
by integrating in $s$ both members in (\ref{estW}) and taking the supremum over $\mu,$ we get, by the Gronwall lemma,
\begin{equation*}
Q(R(t,s),s)\leq CQ(R(t,0),0).
\end{equation*} 
The thesis follows by putting $s=t$, since by  (\ref{r2})  $ Q(R(t,t),t)=Q(R(t),t)$, 
while the monotonicity of the function $Q$ and Lemma \ref{lem2} imply $Q(R(t,0),0)\leq Q(2R(t),0)\leq C Q(R(t),0).$

\qed

\medskip

\noindent \textbf{Proof of Lemma \ref{lemv3}}.

\medskip

We give first the proof for $\ell=1$, that is $\Delta_\ell=\Delta_1$.

\noindent Since the magnetic force gives no contribution to the first component of the velocity, 
by (\ref{C1})  and (\ref{d1}) we get, for any $s\in[t, t+\Delta_1]$,
\begin{equation*}
\begin{split}
|V_1(s) - W_1(s)| &\leq |V_1(t)-W_1(t)| +\\& \int_{t}^{t+\Delta_1} \Big[ |E(X(s),s)| + |E(Y(s),s)| \Big] ds 
\leq \\& {\cal{V}}^{-\eta} + 2 C_7 {\cal{V}}^\frac43 Q^\frac13 \Delta_1 \leq 2 {\cal{V}}^{-\eta}.
\end{split}
\end{equation*}

Analogously we prove the second statement:
\begin{equation*}
\begin{split}
|V_1(s)-W_1(s)|&\geq |V_1(t)-W_1(t)|-\\&\int_{t}^{t+\Delta_1}
\Big[ |E(X(s),s)| +|E(Y(s),s)| \Big] ds
\geq\\& {\cal{V}}^{-\eta} - 2 C_7 {\cal{V}}^{\frac43} Q^\frac13  \Delta_1
\geq \frac12 {\cal{V}}^{-\eta}.
\end{split}
\end{equation*}

\medskip

We show now that Lemma \ref{lemv3} holds true also over a time interval $\Delta_\ell$, $\ell > 1$, supposing for the electric field
the estimate (\ref{avern}) at level $\ell - 1$ (that is, only Lemma \ref{lemv3} at level less than $\ell$ is needed
to establish (\ref{avern}) at level $\ell$).  Since the estimate is uniform in time, it holds also over $[0,\Delta_\ell].$ Hence, proceeding as before we get  for any $s\in[t, t+\Delta_\ell]$,  
\begin{equation*}
\begin{split}
|&V_1(s) - W_1(s)| \leq |V_1(t)-W_1(t)| +\\& \int_{t}^{t+\Delta_\ell} \Big[ |E(X(s),s)| + |E(Y(s),s)| \Big] ds 
\leq \\&{\cal{V}}^{-\eta} +  C   
\left[  {\cal{V}}^{\sigma} Q^{\frac13}+  
\frac{{\cal{V}}^{\frac43} Q^\frac13}{{\cal{V}}^{c}\, {\cal{V}}^{\delta(\ell-2)}} 
  \right]  
  \frac{{\cal{V}}^{\delta(\ell-1)}}{4 C_7  {\cal{V}}^{\frac43+\eta} Q^\frac13} \leq \\ 
	&{\cal{V}}^{-\eta} +  C {\cal{V}}^{\sigma-\frac43-\eta+\delta(\ell-1)}  
	+C {\cal{V}}^{\delta-\eta-c}    \leq 2 {\cal{V}}^{-\eta}, \end{split}
\end{equation*}
by  (\ref{Lt}) and the choice of the parameters made in (\ref{para}).

We proceed analogously for the lower bound.

\bigskip

\noindent  \textbf{Proof of Lemma \ref{lemperp}}.

\medskip

We begin with the case $\ell=1$, that is $\Delta_\ell=\Delta_1$.

\noindent 
We prove the thesis by contradiction. Assume that there exists a time interval $[t^*,t^{**}]\subset [t, t+\Delta_1)$, 
such that $|V_{r}(t^*)|= {\cal{V}}^{\xi},$  $|V_{r}(t^{**})|= 2{\cal{V}}^{\xi}$  and  
${\cal{V}}^{\xi}< |V_{r}(s)|< 2{\cal{V}}^{\xi} \ \ \forall s\in (t^*,t^{**}).$ By the definition of $B$ it is:
\begin{equation}
\begin{split}
\frac{d}{dt}V_{r}^2(t)=2V_{r}(t)\cdot E_{r}(X(t),t), \label{perp}\end{split}
\end{equation}
so that, by (\ref{C1})
\begin{equation}
\begin{split}
|V_{r}(t^{**})|^2\leq \ |V_{r}(t^{*})|^2&+2\int_{t^*}^{t^{**}} \ ds \ |V_{r}(s)|\,
|E_r(X(s),s)| \leq \\
& {\cal{V}}^{2\xi}+4{\cal{V}}^{\xi}\int_{t^*}^{t^{**}} ds \ | E(X(s),s)| \leq  \\
& {\cal{V}}^{2\xi}+4{\cal{V}}^{\xi} \Delta_1  C_7 {\cal{V}}^{\frac43} Q^{\frac13} < 2 {\cal{V}}^{2\xi}.
\end{split}
\label{app1}
\end{equation}
The contradiction proves the thesis.

Now we prove (\ref{L4}).  As before, assume that there exists a time interval $[t^*,t^{**}]\subset [t, t+\Delta_1)$, 
such that 
$|V_{r}(t^*)|= {\cal{V}}^{\xi},$  $|V_{r}(t^{**})|= \frac12{\cal{V}}^{\xi}$  and  
$\frac12{\cal{V}}^{\xi}< |V_{r}(s)|< {\cal{V}}^{\xi}$   $\, \forall s\in (t^*,t^{**})$. Then from (\ref{perp})  it follows, by (\ref{C1}):
\begin{equation}
\begin{split}
|V_{r}(t^{**})|^2\geq &\ |V_{r}(t^{*})|^2-2\int_{t^*}^{t^{**}} \ ds \ |V_{r}(s)| \,
|E_r(X(s),s)| \geq \\
& {\cal{V}}^{2\xi}-2{\cal{V}}^{\xi}\int_{t^*}^{t^{**}} ds \ | E(X(s),s)| \geq \\
&  {\cal{V}}^{2\xi}- 2{\cal{V}}^{\xi} \Delta_1  C_7 {\cal{V}}^{\frac43} Q^{\frac13} 
> \frac12 {\cal{V}}^{2\xi}.
\end{split}
\label{app2}
\end{equation}
Hence also in this case the contradiction proves the thesis.

\medskip

The same argument works also in an interval $[t, t+\Delta_\ell]$, $\ell > 1$,
assuming for the electric field
the estimate (\ref{avern}) at level $\ell - 1$.  In fact we have the bound
(see before, at the end of the proof of Lemma \ref{lemv3}),
$$
\langle E \rangle_{\Delta_{\ell-1}} \, \Delta_\ell  \leq C  {\cal{V}}^{-\eta-\varepsilon},
$$
(for a suitable small $\varepsilon>0$),  which used in (\ref{app1}) and (\ref{app2})  allows to achieve the proof.

\bigskip

\noindent  {\bf{Proof of Lemma \ref{lemrect}.}}

\medskip

We treat first the case $\ell=1$, that is $\Delta_\ell=\Delta_1$.

\noindent Let $t_0\in [t, t+\Delta_1]$ be the time at which $|X_1(s)-Y_1(s)|$ has the minimum value. We put $\Gamma(s)=X_1(s)-Y_1(s)$.  Moreover we define the function
$$
\bar{\Gamma}(s)=\Gamma(t_0)+ \dot{\Gamma}(t_0)(s-t_0).
$$
Since the magnetic force does not act on the first component of the velocity it is:
$$
\ddot{\Gamma}(s)-\ddot{\bar{\Gamma}}(s)=E_1(X(s),s)-E_1(Y(s),s)
$$
$$
\Gamma(t_0)=\bar{\Gamma}(t_0), \quad  \dot{\Gamma}(t_0)=\dot{\bar{\Gamma}}(t_0)
$$
from which it follows
$$
\Gamma(s)=\bar{\Gamma}(s)+\int_{t_0}^s d\tau \int_{t_0}^\tau d\xi \ \big[ E_1(X(\xi),\xi)-E_1(Y(\xi),\xi) \big].
$$
By  (\ref{C1})
\begin{equation}
\begin{split}
\int_{t_0}^s d\tau\int_{t_0}^\tau d\xi \,& |E_1(X(\xi),\xi)-E_1(Y(\xi),\xi)|\leq 2C_7 {\cal{V}}^{\frac43} Q^{\frac13}
\frac{|s-t_0|^2}{2}\leq 
 \\
&C_7 {\cal{V}}^{\frac43} Q^{\frac13} \Delta_1  |s-t_0|\leq\frac{ |s-t_0|}{4}. 
\end{split}
\label{eq_app}
\end{equation}
 Hence,
\begin{equation}
 |\Gamma(s)|\geq |\bar{\Gamma}(s)|-\frac{|s-t_0|}{4}.
\label{z}
\end{equation}
Now we have:
$$
|\bar{\Gamma}(s)|^2=|\Gamma(t_0)|^2+2\Gamma(t_0)\dot{\Gamma}(t_0)(s-t_0)+|\dot{\Gamma}(t_0)|^2 |s-t_0|^2.
$$
We observe that $\Gamma(t_0) \dot{\Gamma}(t_0) (s-t_0)\geq 0.$  Indeed, if $t_0 \in(t, t+\Delta_1)$ 
then $\dot{\Gamma}(t_0)=0$ while if $t_0=t$ or $t_0=t+\Delta_1$ the product $\Gamma(t_0) \dot{\Gamma}(t_0) (s-t_0)\geq 0$.
Hence
$$
|\bar{\Gamma}(s)|^2\geq |\dot{\Gamma}(t_0)|^2 |s-t_0|^2.
$$
By Lemma \ref{lemv3} (adapted to this context with a factor $h\geq 1$), since $t_0\in [t, t+\Delta_1]$ it is
$$
|\dot{\Gamma}(t_0)|\geq h \frac{{\cal{V}}^{-\eta}}{2}
$$
hence
$$
|\bar{\Gamma}(s)|\geq h \frac{{\cal{V}}^{-\eta}}{2} |s-t_0|
$$
and finally by (\ref{z}), 
$$
 |\Gamma(s)|\geq h \frac{{\cal{V}}^{-\eta}}{4}|s-t_0|.
 $$
From this the thesis  follows, since obviously
$
|X(s)-Y(s)|\geq |\Gamma(s)|.$

\medskip

By the same argument used at the end of the proof of Lemma 2, we see that the same proof works also 
considering the interval $[t, t+\Delta_\ell]$,  $\ell > 1$
and assuming  for the electric field
the estimate (\ref{avern}) at level $\ell - 1$.

\bigskip

\noindent$\mathbf{Acknowledgements.}$  Work performed under the auspices of GNFM-INDAM and the Italian Ministry of the University (MIUR).

\medskip

\end{document}